\newcounter{minutes}\setcounter{minutes}{\time}
\newcounter{hours}\setcounter{hours}{\time}
\font\ff=eusm10 scaled 1200
\def\K{\hbox{\ff K}}
\newcommand{\comment}[1]{}
\theoremstyle{definition}
\theoremstyle{plain}
\newtheorem{theorem}[equation]{Theorem}
\newtheorem{corollary}[equation]{Corollary}
\newtheorem{lemma}[equation]{Lemma}
\newtheorem{problem}[equation]{Problem}
\newtheorem{rmk}[equation]{Remark}
\newenvironment{subsec}
{
 \addtocounter{equation}{1}
 \bigskip
 \noindent
   {\bf \arabic{section}.\arabic{equation}.}
\begin{textbf}
} {\end{textbf}}
\numberwithin{equation}{section}
\begin{document}

\begin{center}
{\bf \large On quasiconformal maps with identity boundary values}
\end{center}

\begin{center}

{\bf V. Manojlovi\'c and M. Vuorinen}
\end{center}

\begin{center}
\texttt{File:~\jobname .tex,
          printed: \number\year-\number\month-\number\day,
          \thehours.\ifnum\theminutes<10{0}\fi\theminutes}
\end{center}

Abstract:  Quasiconformal
homeomorphisms of the unit ball $B^n$ of ${\mathbb R}^n, n \ge 3,$  onto itself with identity
boundary values are studied. A spatial analogue of Teichm\"uller's theorem is proved.

2000 Mathematics Subject Classification: Primary 30C65, secondary 30C62.

\section{Introduction}

For a domain $G\subset\mathbb R^n$, $n\geqslant 2$, let
$$
Id(\partial G)=\{f\,:\,\overline{\mathbb R^n} \to \overline{\mathbb
R^n} \mbox{ homeomorphism }:\,f(x)=x,\quad\forall x\in\overline{\mathbb
R^n}\setminus G\}.
$$
Here $\overline{\mathbb R^n}$ stands for the M\"obius space $\mathbb
R^n \cup \{ \infty \} \,.$ We shall always assume that $card  \{
\overline{\mathbb R^n} \setminus G \} \ge 3.$
 If $K\geqslant 1$, then the class of $K$-quasiconformal maps in
$Id(\partial G)$ is denoted by $Id_K(\partial G)$. Throughout this
paper we adopt the standard notation and terminology from V\"ais\"al\"a's book
\cite{v2}. In particular, $K$-quasiconformal maps are defined in
terms of the maximal dilatation as in \cite[p. 42]{v2} if not
otherwise stated. The maximal dilatation of a homeomorphism $f:G \to G'$
where $G, G' \subset {\mathbb R}^n$ are domains, is denoted
by $K(f)\,.$

The subject of this research is to study the following well-known
problem.

\begin{problem} \label{myprob}
\begin{enumerate}
\label{prob1}
\item
Given $a,b \in G$ and  $f\in Id(\partial G)$ with $f(a)=b,$ find a
lower bound for $K(f)$.
\item
\label{prob1-2} Given $a,b\in G,$ construct $f\in Id(\partial G)$
with $f(a)=b$ and give an upper bound for $K(f)$.
\end{enumerate}
\end{problem}

O. Teichm\"uller studied this problem in the case when $G$ is a
plane domain with $card(\overline{\mathbb R^2}\setminus G)=3$ and
solved it by proving the following theorem with a sharp bound for $K(f)$.

\begin{theorem}\label{thm1}
Let $G= \mathbb{R}^2 \setminus \{0,1\}$, $a,b\in G$. Then there
exists $f\in Id_K(\partial G)$ with $f(a)=b$ iff
$$
\log(K(f))\geqslant s_G(a,b),
$$
where $s_G(a,b)$ is the hyperbolic metric of $G$.
\end{theorem}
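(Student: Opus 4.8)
The plan is to pass to the universal cover of $G=\overline{\mathbb{R}^2}\setminus\{0,1,\infty\}$ and reduce the problem to a displacement estimate for quasiconformal self-maps of the disk with identity boundary values. Write $\lambda\colon\mathbb{H}\to G$ for the universal covering (the elliptic modular function), with deck group $\Gamma$; by definition $s_G$ is the push-down of the hyperbolic metric $\rho$ of $\mathbb{H}$, so that $s_G(a,b)=\min\rho(\alpha,\beta)$ over all lifts $\alpha$ of $a$ and $\beta$ of $b$. Since the pure mapping class group of the thrice-punctured sphere is trivial, any $f\in Id_K(\partial G)$ is isotopic rel $\{0,1,\infty\}$ to the identity; hence it lifts to a $K$-quasiconformal, $\Gamma$-equivariant homeomorphism $\tilde f\colon\mathbb{H}\to\mathbb{H}$ with $K(\tilde f)=K(f)$. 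Its boundary extension fixes every parabolic fixed point of $\Gamma$, and these are dense in $\partial\mathbb{H}$, so $\tilde f|_{\partial\mathbb{H}}=\mathrm{id}$.

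For the lower bound I would invoke the following sharp displacement lemma: if $g\colon\mathbb{H}\to\mathbb{H}$ is $K$-quasiconformal and $g|_{\partial\mathbb{H}}=\mathrm{id}$, then $\rho(z,g(z))\le\log K$ for every $z\in\mathbb{H}$. Granting this, fix a lift $\alpha$ of $a$ and put $\beta:=\tilde f(\alpha)$, a lift of $b$; then
$$s_G(a,b)\le\rho(\alpha,\beta)=\rho(\alpha,\tilde f(\alpha))\le\log K(\tilde f)=\log K(f),$$
which is the asserted inequality. The estimate is sharp: the affine stretch $x+iy\mapsto x+iKy$ fixes $\mathbb{R}\cup\{\infty\}$ pointwise, is $K$-quasiconformal, and moves each point a hyperbolic distance exactly $\log K$ along the vertical geodesics.

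For the converse I must realize equality, i.e. construct $f\in Id(\partial G)$ with $f(a)=b$ and $K(f)=e^{s_G(a,b)}$ (any larger $K$ then works a fortiori). The naive candidate---pushing the extremal vertical stretch down through $\lambda$---fails, because the stretch does not commute with $\Gamma$ and so does not descend to $G$. Resolving this is precisely the content of Teichm\"uller's existence theorem: adjoining the marked point one views $X_a=\overline{\mathbb{C}}\setminus\{0,1,\infty,a\}$ and $X_b=\overline{\mathbb{C}}\setminus\{0,1,\infty,b\}$ as two points of the Teichm\"uller space $\mathcal{T}_{0,4}$, and in the homotopy class picked out by the $\rho$-geodesic from $\alpha$ to $\beta$ there is a unique extremal (Teichm\"uller) map, automatically $\Gamma$-equivariant and hence descending to a homeomorphism of $G$ that fixes $0,1,\infty$.

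The quantitative heart of the argument---and the step I expect to be the main obstacle---is to check that this extremal dilatation equals $e^{s_G(a,b)}$. This comes down to identifying the Teichm\"uller metric of $\mathcal{T}_{0,4}$ with the hyperbolic metric $s_G$ under the standard isomorphism $\mathcal{T}_{0,4}\cong\mathbb{H}$, with the correct normalizing constant, so that $\tfrac12\log K_0=d_T(X_a,X_b)=\tfrac12\,s_G(a,b)$. This constant-matching, together with verifying that the extremal map genuinely fixes the three punctures rather than permuting them, is the delicate part; by comparison the displacement lemma and the covering-space reduction are routine.
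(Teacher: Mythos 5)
The paper itself contains no proof of Theorem \ref{thm1}: it is quoted as Teichm\"uller's classical displacement theorem, with reference \cite{t}, so your attempt has to stand on its own. It does not, because the step you describe as routine is false. Your lower bound rests on the ``sharp displacement lemma'' that every $K$-quasiconformal $g\colon\mathbb{H}\to\mathbb{H}$ with $g|_{\partial\mathbb{H}}=\mathrm{id}$ satisfies $\rho(z,g(z))\le\log K$, and this contradicts results stated in this very paper. Up to a M\"obius change of model and extension by the identity across the boundary, the maps $g$ in your lemma are exactly the class $Id_K(\partial B^2)$, and by Krzy\.z's theorem (Theorem \ref{krzyz0}) the sharp displacement bound in that class is
$$
\tanh\frac{\rho_{B^2}(g(z),z)}{2}\;\le\; c_1=\mu^{-1}\Bigl(\log\frac{\sqrt K+1}{\sqrt K-1}\Bigr),
$$
with equality attained by an extremal map; Lemma \ref{krzyzbound} gives $c_1>\frac{K-1}{K+1}=\tanh\bigl(\tfrac12\log K\bigr)$ for every $K>1$. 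Hence for each $K>1$ there is a $K$-quasiconformal self-map of the half-plane, equal to the identity on the boundary, that moves some point a hyperbolic distance strictly larger than $\log K$ (in fact, as $K\to\infty$ the sharp displacement $2\,\mathrm{artanh}\,c_1$ grows like a constant times $\sqrt K$, far faster than $\log K$). Your affine stretch only shows that the value $\log K$ is realized by one particular map, not that that map is extremal --- it is not. So the entire lower-bound half of your argument collapses. The conceptual error is that in reducing to the disk you discarded the $\Gamma$-equivariance of the lift $\tilde f$ (equivalently, the fact that $f$ fixes the punctures $0,1,\infty$, near which the hyperbolic density of $G$ blows up), and that is precisely the feature that makes Teichm\"uller's bound true. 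A displacement estimate for $\Gamma$-equivariant maps with identity boundary values is, after projecting back to $G$, the very statement of Theorem \ref{thm1}; it cannot be invoked as a known lemma about arbitrary self-maps of the disk, and the classical proofs instead obtain it from extremal-length or Teichm\"uller-theoretic arguments on the four-punctured sphere or its torus double cover.

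The existence direction fares somewhat better: the route through $T_{0,4}\cong\mathbb{H}$, Teichm\"uller's existence theorem, and the identification of the Teichm\"uller metric with $\tfrac12 s_G$ (via the elliptic involution and Gr\"otzsch's theorem for tori) is the standard one. But, as you yourself acknowledge, the decisive constant-matching step and the verification that the extremal map fixes the punctures pointwise are left unproved, so this half is a plan rather than a proof. As it stands, the proposal establishes neither implication of the theorem.
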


Motivated by a question of F.W. Gehring, J. Krzy\.z \cite[Theorem
1]{k} proved the following theorem. See also Teichm\"uller \cite{t} and Krushkal \cite[p.59]{kr}.
Write $B^n(r)=\{x\in{\mathbb{R}}^n\,:\,|x|<r\}$ and $B^n =B^n(1)$.

\begin{theorem} \label{krzyz0}
( Krzy\.z \cite[Theorem 1]{k}) \label{thm2} For $f\in Id_K(\partial
B^2)$ the sharp bounds are:
\begin{equation}
\label{krzyz1}
|f(0)|\leqslant\mu^{-1}\left(\log\frac{\sqrt{K}+1}{\sqrt{K}-1}\right)\equiv c_1
\end{equation}
where $\mu$ is the function defined in (\ref{dec_hom}) and
\begin{equation}
\label{krzyz2} \tanh\frac{\rho_{B^2}(f(z),z)}{2}\leqslant c_1
\end{equation}
for every $z\in B^2$, where $\rho_{B^2}$ is the hyperbolic metric
defined in Lemma \ref{hypmetr}.
\end{theorem}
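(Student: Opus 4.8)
The plan is to reduce the general displacement inequality (\ref{krzyz2}) to the centre case (\ref{krzyz1}) by a M\"obius conjugation, and then to prove (\ref{krzyz1}) by a ring--modulus argument based on the extremal property of the Gr\"otzsch and Teichm\"uller rings. First I would show that (\ref{krzyz2}) is a formal consequence of (\ref{krzyz1}). Fix $z\in B^2$ and let $T$ be the conformal automorphism of $B^2$ with $T(z)=0$. Put $g=T\circ f\circ T^{-1}$. Since $T$ maps $B^2$, $\partial B^2$ and $\overline{\mathbb R^2}\setminus\overline{B^2}$ onto themselves and $f$ is the identity outside $B^2$, for every $x\in\overline{\mathbb R^2}\setminus B^2$ we get $g(x)=T(f(T^{-1}x))=T(T^{-1}x)=x$; hence $g\in Id(\partial B^2)$, and $K(g)=K(f)=K$ because conjugation by a conformal map preserves the maximal dilatation. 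Moreover $g(0)=T(f(z))$, and since $T$ is an isometry of $\rho_{B^2}$ carrying $z$ to $0$, we have $|g(0)|=\tanh\frac{\rho_{B^2}(T(f(z)),0)}{2}=\tanh\frac{\rho_{B^2}(f(z),z)}{2}$. Thus (\ref{krzyz1}) applied to $g$ yields exactly (\ref{krzyz2}); the case $z=0$ recovers (\ref{krzyz1}) itself.

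It remains to prove (\ref{krzyz1}). Write $r=|f(0)|$; conjugating by a rotation (which also preserves $Id_K(\partial B^2)$) I may assume $f(0)=r\in[0,1)$. By definition of $Id(\partial B^2)$ the map $f$ is already a $K$-quasiconformal self-homeomorphism of $\overline{\mathbb R^2}$ that is the identity on $\overline{\mathbb R^2}\setminus B^2$, so it fixes $\partial B^2$ and the whole exterior pointwise. I would test $f$ against the Teichm\"uller ring $R=\overline{\mathbb R^2}\setminus([-1,0]\cup[1,\infty])$. As $[1,\infty]$ lies in the exterior it is fixed, $f([1,\infty])=[1,\infty]$, whereas $f([-1,0])=\gamma$ is an arc in $\overline{B^2}$ joining the fixed point $-1$ to $f(0)=r$. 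Hence $f(R)=\overline{\mathbb R^2}\setminus(\gamma\cup[1,\infty])$ is a ring whose complementary continua contain the separating pairs $\{-1,r\}$ and $\{1,\infty\}$.

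The quasiconformal distortion inequality for the conformal modulus gives $\mathrm{mod}(f(R))\le K\,\mathrm{mod}(R)$, while the extremal property of the Teichm\"uller (equivalently Gr\"otzsch) ring bounds $\mathrm{mod}(f(R))$ below by the modulus of the canonical ring determined by the pairs $\{-1,r\}$ and $\{1,\infty\}$. Both canonical moduli are explicit in terms of $\mu$ through the standard Gr\"otzsch--Teichm\"uller identities, and substituting them reduces the combined inequality, after simplification, to $\mu(r)\ge\log\frac{\sqrt K+1}{\sqrt K-1}$. Since $\mu$ is a decreasing homeomorphism of $(0,1)$ onto $(0,\infty)$, this is equivalent to $r\le\mu^{-1}\bigl(\log\frac{\sqrt K+1}{\sqrt K-1}\bigr)=c_1$, which is (\ref{krzyz1}). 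Sharpness I would establish by displaying the extremal map: transplant $R$ conformally to a round annulus, apply the radial power stretch of dilatation $K$ (the Teichm\"uller extremal map), transplant back, and extend by the identity outside $B^2$; the same identities then show it lies in $Id_K(\partial B^2)$ and realizes $|f(0)|=c_1$.

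The step I expect to be the main obstacle is the bookkeeping in the previous paragraph: arranging the two canonical moduli so that the factor $K$ condenses into the combination $\frac{\sqrt K+1}{\sqrt K-1}$. The square root here reflects the symmetry of the configuration under reflection in $\partial B^2$, which effectively halves the relevant exponent, and getting this factor exactly right, together with verifying that the candidate extremal map genuinely belongs to $Id_K(\partial B^2)$ and attains equality, is where the real work lies; the conjugation reduction and the qualitative modulus estimate are routine.
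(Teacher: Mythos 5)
Your opening reduction of (\ref{krzyz2}) to (\ref{krzyz1}) by conjugating with a M\"obius automorphism $T$ of $B^2$ taking $z$ to $0$ is correct; it is exactly the step (\ref{rho_Bn}) in the paper. Note, however, that the paper does not prove Theorem \ref{thm2} at all --- it is quoted from Krzy\.z \cite{k} --- and the in-paper argument closest to your second step is the proof of Theorem \ref{main_theorem}, which is precisely your Teichm\"uller-ring test carried out in $\mathbb{R}^n$. That theorem is \emph{not} sharp, and this is where your proposal has a genuine gap: the ring argument you describe cannot produce the constant $c_1$. There is also a directional slip: Teichm\"uller's modulus theorem bounds the modulus of a ring separating $\{-1,r\}$ from $\{1,\infty\}$ from \emph{above} (the canonical ring has the largest modulus), not from below as you assert, and the quasiconformal inequality you need is $\mathrm{mod}(f(R))\ge \mathrm{mod}(R)/K$, not $\mathrm{mod}(f(R))\le K\,\mathrm{mod}(R)$; as literally written, your two inequalities combine to give no upper bound on $r=|f(0)|$ at all.

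After fixing the directions, what your configuration actually yields is, in the paper's notation,
\begin{equation*}
\tau_2\left(\frac{1-r}{1+r}\right)\leqslant K\,\tau_2(1),
\end{equation*}
and since by (\ref{groandteich}) and $\gamma_2(1/r)=2\pi/\mu(r)$ one has $\tau_2(t)=\pi/\mu\bigl(1/\sqrt{1+t}\,\bigr)$, this reads $\mu\bigl(\sqrt{(1+r)/2}\,\bigr)\geqslant \pi/(2K)$, i.e.\ $r\leqslant 2\varphi_{K,2}(1/\sqrt2)^2-1=1-2a$ with $a=\varphi_{1/K,2}(1/\sqrt2)^2$ (use (\ref{phipyth})). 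This is exactly the $n=2$ case of Theorem \ref{main_theorem}, and it is \emph{not} equivalent to $\mu(r)\geqslant\log\frac{\sqrt K+1}{\sqrt K-1}$: by (\ref{aform}) and (\ref{eta1b}), $1-2a=\tanh\bigl(\tfrac12\log\eta_{K,2}(1)\bigr)\geqslant\tanh\bigl(\tfrac{\pi}{2}(K-1)\bigr)\sim\tfrac{\pi}{2}(K-1)$ as $K\to1^+$, whereas Lemma \ref{krzyzbound} gives $c_1<2(K-1)/(\sqrt K+1)\sim K-1$. Since $\pi/2>1$, your bound strictly exceeds the sharp one for $K$ near $1$, so no bookkeeping can make the factor $K$ ``condense'' into $\frac{\sqrt K+1}{\sqrt K-1}$. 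The loss is structural: the symmetrization step retains only the facts that $f$ fixes $-1$ and the ray $[1,\infty)$ and that $\gamma$ passes through $-1$ and $r$, discarding the constraint that $f$ is the identity on all of $\partial B^2$ and the exterior, which is what Krzy\.z's sharp argument must exploit. Your sharpness construction fails for the same reason: the radial power stretch of $R$ in annulus coordinates does not fix $[1,\infty)$ or $\partial B^2$ pointwise, so extending it by the identity outside $B^2$ is not even continuous, and the resulting map does not lie in $Id_K(\partial B^2)$.
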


The constant $c_1$ in (\ref{krzyz1}) is quite involved. It is hard
to see how it behaves in the crucial passage to limit $K\to1\,.$
Therefore we give an explicit bound for this constant.

\begin{lemma} \label{krzyzbound}
The constant $c_1$ in (\ref{krzyz1}) satisfies for $K>1$
$$
\frac{K-1}{K+1}<c_1<2\frac{K-1}{\sqrt{K}+1}.
$$
\end{lemma}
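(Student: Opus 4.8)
The plan is to translate each claimed inequality for $c_1$ into an inequality for $\mu$ evaluated at the candidate endpoint, and then invoke two classical estimates for the Gr\"otzsch modulus. Writing $t=\log\frac{\sqrt K+1}{\sqrt K-1}$, the definition (\ref{krzyz1}) gives $\mu(c_1)=t$. Since $\mu$ is a strictly decreasing homeomorphism of $(0,1)$ onto $(0,\infty)$ (see (\ref{dec_hom})), for any $a\in(0,1)$ we have $c_1<a\iff\mu(a)<t$ and $c_1>a\iff\mu(a)>t$, so it suffices to estimate $\mu$ at the two endpoints $\frac{K-1}{K+1}$ and $2\frac{K-1}{\sqrt K+1}$. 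The two facts I will use are the standard bounds $\log\frac{1+r'}{r}<\mu(r)<\log\frac{4}{r}$, valid for $0<r<1$ with $r'=\sqrt{1-r^2}$; note that the lower bound is just $\operatorname{arth} r'$.

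For the lower bound put $L=\frac{K-1}{K+1}\in(0,1)$. A direct computation gives $L'=\sqrt{1-L^2}=\frac{2\sqrt K}{K+1}$, hence $1+L'=\frac{(\sqrt K+1)^2}{K+1}$ and therefore $\frac{1+L'}{L}=\frac{(\sqrt K+1)^2}{K-1}=\frac{\sqrt K+1}{\sqrt K-1}$. Consequently $\mu(L)>\log\frac{1+L'}{L}=\log\frac{\sqrt K+1}{\sqrt K-1}=t=\mu(c_1)$, and monotonicity yields $c_1>L=\frac{K-1}{K+1}$. This exact cancellation is the heart of the matter: the lower endpoint $\frac{K-1}{K+1}$ is precisely the value at which the elementary lower estimate for $\mu$ reproduces the argument $t$, which is presumably how it was found.

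For the upper bound put $U=2\frac{K-1}{\sqrt K+1}=2(\sqrt K-1)$. If $U\ge 1$ there is nothing to prove, since $c_1\in(0,1)$. If $U<1$, then $\frac{4}{U}=\frac{2(\sqrt K+1)}{K-1}=\frac{2}{\sqrt K-1}$, so the upper estimate gives $\mu(U)<\log\frac{4}{U}=\log\frac{2}{\sqrt K-1}$. Since $K>1$ forces $\sqrt K+1>2$, we get $\log\frac{2}{\sqrt K-1}<\log\frac{\sqrt K+1}{\sqrt K-1}=t=\mu(c_1)$, and monotonicity gives $c_1<U$.

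The only genuine obstacle is supplying and correctly citing the two inequalities $\log\frac{1+r'}{r}<\mu(r)<\log\frac{4}{r}$; both are classical properties of the Gr\"otzsch modulus and can be read off from the representation of $\mu$ through complete elliptic integrals together with the identity $\mu(r)\mu(r')=\pi^2/4$. Everything else is the algebraic identity $\frac{1+L'}{L}=\frac{\sqrt K+1}{\sqrt K-1}$ and routine monotonicity bookkeeping. I also note that the upper estimate in fact yields the sharper bound $c_1<\frac{4(K-1)}{(\sqrt K+1)^2}$, from which the stated $2\frac{K-1}{\sqrt K+1}$ follows by the crude step $(\sqrt K+1)^2\ge 2(\sqrt K+1)$; this explains why the stated upper bound is not sharp while the lower one matches the $\mu$-inequality used exactly.
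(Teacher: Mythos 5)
Your proof is correct and takes essentially the same route as the paper: the two classical estimates you invoke, $\operatorname{artanh} r' = \log\frac{1+r'}{r} < \mu(r) < \log\frac{4}{r}$, are exactly the inequalities $\sqrt{1-\tanh^2 y} < \mu^{-1}(y) < 4e^{-y}$ (cited in the paper from \cite[(5.27)]{avv}) rewritten for $\mu$ instead of $\mu^{-1}$, and the paper applies them at $y=\log\frac{\sqrt K+1}{\sqrt K-1}$ while you apply them at the candidate endpoints and conclude by monotonicity. Even your closing remark about the sharper bound $\frac{4(K-1)}{(\sqrt K+1)^2}$ reproduces the paper's intermediate step $c_1 < 4\frac{\sqrt K-1}{\sqrt K+1} < 2\frac{K-1}{\sqrt K+1}$.
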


Later studies of this topic include the paper of G. Martin \cite{m}.
He formulated a question of the same type as Gehring did, but for
general plane domains. This question was solved in the negative, at
the same time by A. Solynin--M. Vuorinen \cite{sv} and H.
Xinzhong--N.E. Cho \cite{xc}.

Our goal here is to study the $n$-dimensional case.

For any proper domain $G\subset\mathbb R^n$ we consider the density
$\rho(x)=\frac{1}{d(x,\partial G)}, \, x \in G.$ The corresponding
metric, denoted by $k_G$ \cite{gp}, is called the quasihyperbolic
metric in $G$. Thus for $x,y\in G$,
$$
k_G(x,y)=\inf_\gamma\int_\gamma\rho\,ds,
$$
where the infimum is taken over the family of all rectifiable curves
$\gamma$ in $G$ joining $x$ to $y$.

Gehring and Palka \cite{gp} proved the following upper bound for
Problem \ref{myprob}. Presumably this bound could be improved.

\begin{theorem}
\cite[Lemma 3.1]{gp} In Problem \ref{prob1} (\ref{prob1-2}) we can
choose $K(f)\leqslant\exp(c_2 k_G(a,b) )$ where $c_2>0$ only depends
on the dimension $n$.
\end{theorem}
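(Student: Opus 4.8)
The plan is to build $f$ as a finite composition of elementary quasiconformal maps, each supported in a ball contained in $G$ and moving its centre a small, controlled distance, and then to estimate the total dilatation by the submultiplicativity of the maximal dilatation together with the length of a quasihyperbolic geodesic. The point is that $\log K$ is subadditive under composition while $k_G$ is additive along a geodesic, so a per-step dilatation bound that is \emph{linear} in the step length telescopes into the desired global inequality.

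First I would fix a threshold $\delta>0$ small enough that $e^{\delta}-1\le \tfrac12$, and join $a$ to $b$ by a quasihyperbolic geodesic $\gamma$ (these exist in proper subdomains of $\mathbb{R}^n$). Writing $\ell=k_G(a,b)$ for its length, I subdivide $\gamma$ by points $a=x_0,x_1,\dots,x_N=b$ in order along $\gamma$ so that the quasihyperbolic length $\ell_i$ of the subarc from $x_{i-1}$ to $x_i$ satisfies $\ell_i\le\delta$; by additivity along a geodesic, $\sum_{i=1}^N \ell_i=\ell$. The elementary lower bound $k_G(x,y)\ge \log\bigl(1+|x-y|/d(x,\partial G)\bigr)$ then gives $|x_i-x_{i-1}|\le (e^{\ell_i}-1)\,d(x_{i-1},\partial G)\le \tfrac12\,d(x_{i-1},\partial G)$, so the ball $B_i=B(x_{i-1},r_i)$ with $r_i=d(x_{i-1},\partial G)$ lies in $G$ and contains $x_i$ well inside.

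Next I construct the building blocks. Fixing once and for all a Lipschitz profile $\phi:[0,1]\to[0,1]$ with $\phi(0)=1$, $\phi(1)=0$, and small slope, set $\psi_i(x)=\phi\bigl(|x-x_{i-1}|/r_i\bigr)$ and
$$
g_i(x)=x+(x_i-x_{i-1})\,\psi_i(x),
$$
extended by the identity outside $B_i$. Then $g_i(x_{i-1})=x_i$ and $g_i=\mathrm{id}$ outside $B_i\subset G$, so each $g_i\in Id(\partial G)$; since the perturbation has Lipschitz constant at most $(|x_i-x_{i-1}|/r_i)\|\phi'\|_\infty\le\tfrac12\|\phi'\|_\infty$, a sufficiently flat $\phi$ makes $g_i$ a bi-Lipschitz homeomorphism of $\overline{\mathbb{R}^n}$. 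The crucial local estimate is that $Dg_i=I+(x_i-x_{i-1})(\nabla\psi_i)^{T}$ is a rank-one perturbation of the identity of operator norm $O(|x_i-x_{i-1}|/r_i)$, whence the maximal dilatation satisfies
$$
\log K(g_i)\le C_0(n)\,\frac{|x_i-x_{i-1}|}{r_i}\le C_0(n)\,(e^{\ell_i}-1)\le C_1(n)\,\ell_i,
$$
the last step using $e^{t}-1\le \tfrac{e^{\delta}-1}{\delta}\,t$ for $0\le t\le\delta$.

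Finally I set $f=g_N\circ\cdots\circ g_1$, so that $f(a)=b$ and $f\in Id(\partial G)$, and the submultiplicativity $K(g\circ h)\le K(g)K(h)$ gives
$$
\log K(f)\le \sum_{i=1}^N \log K(g_i)\le C_1(n)\sum_{i=1}^N \ell_i=C_1(n)\,k_G(a,b),
$$
so $c_2=C_1(n)$ depends only on $n$. I expect the main obstacle to be the local dilatation bound for $g_i$: one must verify that the rank-one Jacobian perturbation keeps the singular-value ratio, and hence $K(g_i)$, within $1+O(|x_i-x_{i-1}|/r_i)$ uniformly over $B_i$, while simultaneously confirming that $g_i$ is genuinely a homeomorphism. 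The remaining ingredients — the geodesic subdivision, the inequality $|x_i-x_{i-1}|\le(e^{\ell_i}-1)d(x_{i-1},\partial G)$, and the composition estimate — are routine.
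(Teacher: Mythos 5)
The paper does not prove this statement at all --- it is quoted verbatim from Gehring--Palka \cite[Lemma 3.1]{gp} --- so there is no internal proof to compare against; what you have written is, in substance, a correct reconstruction of the classical argument behind that lemma: chain points along a quasihyperbolically short curve so that each step satisfies $|x_i-x_{i-1}|\le\tfrac12 d(x_{i-1},\partial G)$, push each point to the next by an elementary map supported in the ball $B(x_{i-1},d(x_{i-1},\partial G))$ with $\log K$ linear in the step length, and telescope via $K(g\circ h)\le K(g)K(h)$. All the main ingredients check out: the lower bound $k_G(x,y)\ge\log\bigl(1+|x-y|/d(x,\partial G)\bigr)$ is standard, the rank-one perturbation $Dg_i=I+(x_i-x_{i-1})(\nabla\psi_i)^T$ has singular values in $[1-\varepsilon_i,1+\varepsilon_i]$ with $\varepsilon_i=\|\phi'\|_\infty|x_i-x_{i-1}|/r_i$ and Jacobian $1+\nabla\psi_i\cdot(x_i-x_{i-1})\ge 1-\varepsilon_i>0$, giving $\log K(g_i)\le Cn\varepsilon_i$, and the convexity step $e^t-1\le\frac{e^\delta-1}{\delta}t$ on $[0,\delta]$ makes the bound linear in $\ell_i$.

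One slip should be fixed, though it is cosmetic rather than fatal: a profile with $\phi(0)=1$, $\phi(1)=0$ can never have ``small slope'' --- the mean value theorem forces $\|\phi'\|_\infty\ge 1$ --- so ``a sufficiently flat $\phi$'' does not exist. Fortunately you do not need flatness: the contraction property of the perturbation comes from the factor $|x_i-x_{i-1}|/r_i\le\tfrac12$, which your choice of $\delta$ already guarantees, so taking simply $\phi(t)=1-t$ gives perturbation Lipschitz constant at most $\tfrac12<1$, hence injectivity, bi-Lipschitz bounds $\tfrac12\le\frac{|g_i(x)-g_i(y)|}{|x-y|}\le\tfrac32$, and $\varepsilon_i\le\tfrac12$ in the dilatation estimate. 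Two further remarks of lesser weight: invoking quasihyperbolic \emph{geodesics} is an anachronism relative to the 1976 source (their existence in proper subdomains of $\mathbb{R}^n$ is a later theorem of Gehring--Osgood), but it is harmless --- a curve of quasihyperbolic length at most $2k_G(a,b)$ suffices and only doubles $c_2$; and since $\phi(t)=1-t$ makes $\psi_i$ merely Lipschitz, the dilatation bound should be read as an essential supremum over points of differentiability, which is exactly what the analytic definition of quasiconformality requires for a bi-Lipschitz map.
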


In the case of uniform domains with connected boundary, a lower
bound was given by the second author in \cite{vu1}, see Theorem
\ref{vu1thm} below. For the case of the unit ball this problem was
studied by G.D.~Anderson and M.~K.~Vamanamurthy \cite{av}, who found
the following counterpart for Theorem \ref{krzyz0} for dimensions
$n\ge 3.$ Note, in particular, that they use here the linear
dilatation and that an additional symmetry hypothesis is required.
They conjectured on p. 2 of \cite{av} that the result also holds
without this additional hypothesis.

\begin{theorem}
\cite{av} For $f\in Id(\partial B^n)$ with the linear dilatation
$H(f) =K$ (cf. \cite[p. 78]{v2}) we have
$$
|f(0)|\leqslant c_1,
$$
where $c_1$ is as in (\ref{krzyz1}) provided that $f$ satisfies a
certain symmetry hypothesis.
\end{theorem}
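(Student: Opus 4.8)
The inequality to be proved is really a statement about the conformal modulus of a ring. Since the hyperbolic metric of $B^n$ satisfies $\rho_{B^n}(0,f(0))=\log\frac{1+|f(0)|}{1-|f(0)|}$, we have $|f(0)|=\tanh\frac{\rho_{B^n}(0,f(0))}{2}$, so $|f(0)|\le c_1$ is exactly the case $z=0$ of the $n$-dimensional analogue of (\ref{krzyz2}). Moreover the map $\mu$ occurring in (\ref{krzyz1}) is a decreasing homeomorphism of $(0,1)$ onto $(0,\infty)$; hence the assertion $|f(0)|\le\mu^{-1}\bigl(\log\frac{\sqrt K+1}{\sqrt K-1}\bigr)$ is equivalent to the modulus lower bound $\mu(|f(0)|)\ge\log\frac{\sqrt K+1}{\sqrt K-1}$, where $\mu(r)$ is, up to the dimensional normalization, the modulus of the Gr\"otzsch ring $B^n\setminus[0,re_1]$. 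The plan is therefore to produce a ring domain whose modulus $f$ is forced to distort by a controlled amount, and then to read off the bound from the known modulus values of the extremal Gr\"otzsch and Teichm\"uller rings.

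First I would normalize by a rotation so that $f(0)=|f(0)|e_1$. The model is Krzy\.z's planar argument behind Theorem \ref{krzyz0}: because $f$ is the identity on $\partial B^n$, the fixed boundary points together with the displaced centre $f(0)$ determine a Gr\"otzsch/Teichm\"uller configuration whose modulus, on one side, equals $\mu(|f(0)|)$ and, on the other side, is controlled by the quasiconformality of $f$. Comparing the modulus of this ring with that of its image and inserting the explicit modulus of the relevant Teichm\"uller ring --- which is the origin of the term $\log\frac{\sqrt K+1}{\sqrt K-1}$ --- should yield $\mu(|f(0)|)\ge\log\frac{\sqrt K+1}{\sqrt K-1}$, i.e. $|f(0)|\le c_1$. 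Equality in the planar case is realized by an explicit radial stretch fixing $\partial B^2$, and an explicit form of the resulting $c_1$ is then available from Lemma \ref{krzyzbound}.

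The step I expect to be the main obstacle is converting the pointwise \emph{linear} dilatation hypothesis $H(f)=K$ into the global modulus inequality above with the sharp two-dimensional constant. The maximal dilatation would feed directly into $K_I$- and $K_O$-type capacity inequalities, but the linear dilatation is an infinitesimal quantity of a different nature, and for $n\ge3$ its comparison with the maximal dilatation involves a dimensional exponent. This is precisely why a constant built from $\sqrt K$, a two-dimensional quantity, cannot be expected in general, and why the symmetry hypothesis is indispensable: its role is to force the extremal behaviour of $f$ into a two-dimensional plane through the axis $\mathbb{R}e_1$, collapsing the $n$-dimensional modulus problem onto the planar Gr\"otzsch/Teichm\"uller estimate so that the sharp bound of Theorem \ref{krzyz0} transfers. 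Making this reduction rigorous --- pinning down the symmetry class for which the radial planar configuration is genuinely extremal, and excluding a larger displacement transverse to the plane of symmetry --- is where the real work lies, and it is exactly the point at which the hypothesis is (as Anderson and Vamanamurthy conjectured) perhaps removable.
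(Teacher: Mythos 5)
Your text is an outline, not a proof, and you concede as much: the two steps you defer to "where the real work lies" --- converting the hypothesis $H(f)=K$ on the \emph{linear} dilatation into a modulus/capacity inequality carrying the two-dimensional constant $\sqrt K$, and making rigorous the reduction afforded by the symmetry hypothesis --- are precisely the content of the theorem. What you do carry out is only a restatement: since $\mu$ is a decreasing homeomorphism, $|f(0)|\leqslant c_1$ is indeed equivalent to $\mu(|f(0)|)\geqslant\log\frac{\sqrt K+1}{\sqrt K-1}$, but beyond that the ring configuration is never specified (which ring, which image ring, which symmetrization is applied and why it is admissible), the "certain symmetry hypothesis" is never formulated, and no inequality linking $H(f)$ to the quasi-invariance of any modulus is established. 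That last point is the crux: for $n\geqslant 3$ the linear dilatation controls the maximal dilatation only up to dimension-dependent exponents (cf. \cite[p. 78]{v2}), so a capacity argument fed naively by $H(f)=K$ would produce a power such as $K^{1/(n-1)}$ or $K^{n-1}$, not $\sqrt K$; you correctly diagnose this obstacle and the role the symmetry hypothesis must play in collapsing the problem to a planar one, but diagnosing it is not overcoming it.

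You should also know that the paper contains no proof of this statement to compare yours against: it is quoted verbatim from Anderson--Vamanamurthy \cite{av}, where the symmetry hypothesis is formulated and exploited, and the paper's own contribution (Theorem \ref{main_theorem}) is a different, weaker-constant bound proved for the maximal dilatation \emph{without} any symmetry hypothesis, via the extremality of the Teichm\"uller ring, spherical symmetrization, and the identity (\ref{aform}). So judged on its own terms, your proposal identifies the correct general shape of the argument (normalize $f(0)$ onto an axis, compare a Gr\"otzsch/Teichm\"uller ring with its image, use that $c_1$ solves the extremal planar problem of Krzy\.z), but it leaves a genuine gap where the theorem's actual difficulty sits.
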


The goal of this paper is to prove the following theorem where no
extra symmetry hypotheses are required.
\begin{theorem}
\label{main_theorem} If $f\in Id_K(\partial B^n)$, then for all
$x\in B^n$
$$
\rho_{B^n}(f(x),x)\leqslant\log\frac{1-a}a,\quad
a=\varphi_{1/K,n}(1/\sqrt{2})^2,
$$
where $\rho_{B^n}$ is the hyperbolic metric
defined in Lemma \ref{hypmetr} and
$\varphi_{K,n}$ is as in (\ref{phindef}).
\end{theorem}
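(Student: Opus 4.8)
The plan is to reduce the claim to the single point $x=0$ and then to bound $|f(0)|$ by a capacity estimate based on the Teichm\"uller ring. First I would use that $Id_K(\partial B^n)$ is invariant under conjugation by M\"obius automorphisms of $B^n$: if $h$ is such an automorphism then $g:=h\circ f\circ h^{-1}$ is $K$-quasiconformal, and since $f=\mathrm{id}$ on $\overline{\mathbb R^n}\setminus B^n$ and $h$ preserves this set, also $g\in Id_K(\partial B^n)$. Choosing $h$ with $h(x)=0$ and using the M\"obius invariance of $\rho_{B^n}$ gives $\rho_{B^n}(f(x),x)=\rho_{B^n}(g(0),0)$, so it suffices to bound $\rho_{B^n}(f(0),0)=\log\frac{1+t}{1-t}$ for every $f\in Id_K(\partial B^n)$, where $t:=|f(0)|$; the asserted inequality is then equivalent to $t\le 1-2a$.

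Next I would fix the extremal configuration. Write $b=f(0)$, $e=b/|b|$, and let $R=\overline{\mathbb R^n}\setminus(E\cup F)$ be the Teichm\"uller ring with $E=[-e,0]$ and $F=\{se:s\ge 1\}\cup\{\infty\}$, so that $\mathrm{cap}(R)=\tau_n(1)$. Since $f$ is the identity outside $B^n$, it is $K$-quasiconformal on all of $\overline{\mathbb R^n}$ ($\partial B^n$ being removable), it fixes $F$ together with the point $-e$, and it sends $0$ to $b=te$. Hence $f(R)$ is again a ring whose bounded complementary component $f(E)$ is a continuum joining $-e$ to $te$, while its unbounded component is still $F$.

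Now I would sandwich $\mathrm{cap}(f(R))$. Quasiconformality gives $\mathrm{cap}(f(R))\le K\,\mathrm{cap}(R)=K\tau_n(1)$. Applying the similarity $x\mapsto (x-te)/(1+t)$ (which preserves capacity) sends the four collinear points $-e,te,e,\infty$ to $-e,0,\tfrac{1-t}{1+t}e,\infty$, so by the extremal (minimality) property of the Teichm\"uller ring, $\mathrm{cap}(f(R))\ge\tau_n\!\big(\tfrac{1-t}{1+t}\big)$. Combining, $\tau_n\!\big(\tfrac{1-t}{1+t}\big)\le K\tau_n(1)$. Using $\tau_n(u)=2^{1-n}\gamma_n(\sqrt{1+u})$ (so $\gamma_n(\sqrt2)=2^{n-1}\tau_n(1)$), the defining relation $\gamma_n(1/\varphi_{K,n}(r))=K\gamma_n(1/r)$ at $r=1/\sqrt2$, and the monotonicity of $\gamma_n$, the inequality becomes $\sqrt{2/(1+t)}\ge 1/\varphi_{K,n}(1/\sqrt2)$, that is $t\le 2\varphi_{K,n}(1/\sqrt2)^2-1$. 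Finally the relation $\varphi_{K,n}(1/\sqrt2)^2+\varphi_{1/K,n}(1/\sqrt2)^2=1$ at the self-dual radius rewrites the right-hand side as $1-2a$, which is the desired bound.

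The hard part is the capacity estimate: one must check that $f$ is globally $K$-quasiconformal and, above all, control the image ring $f(R)$ well enough that the Teichm\"uller extremal bound $\tau_n(\tfrac{1-t}{1+t})$ genuinely applies to the a priori uncontrolled continuum $f(E)$. Once that lower bound is in hand, the M\"obius reduction and the special-function bookkeeping (the $\gamma_n$--$\tau_n$ relation and the identity at $r=1/\sqrt2$) are routine.
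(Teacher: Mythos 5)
Your M\"obius reduction to $x=0$ and the identity $\rho_{B^n}(0,f(0))=\log\frac{1+t}{1-t}$, $t=|f(0)|$, match the paper, and your capacity argument is internally sound: with $R$ the Teichm\"uller ring with components $E=[-e,0]$ and $F=\{se:s\geq 1\}\cup\{\infty\}$, the estimates $\mathrm{cap}(f(R))\leq K\,\mathrm{cap}(R)=K\tau_n(1)$ and $\mathrm{cap}(f(R))\geq\tau_n\left(\frac{1-t}{1+t}\right)$ are both correct and yield $t\leq 2\varphi_{K,n}(1/\sqrt 2)^2-1$. The gap is your very last step. The identity $\varphi_{K,n}(1/\sqrt 2)^2+\varphi_{1/K,n}(1/\sqrt 2)^2=1$, which you use to rewrite this bound as $t\leq 1-2a$, is only available for $n=2$: it is (\ref{phipyth}), proved from $\mu(r)\mu(r')=(\pi/2)^2$, a strictly planar fact. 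For $n\geq 3$ it is equivalent to the functional equation $\tau_n(s)\tau_n(1/s)=\tau_n(1)^2$, i.e.\ to $\eta_{K,n}(1)\,\eta_{1/K,n}(1)=1$, which is not known in higher dimensions; this is precisely why the paper expresses its constant through $\varphi_{1/K,n}$ in (\ref{aform}) and states (\ref{eta1}) only for $n=2$. Thus for $n\geq 3$ (the case the paper is about) you have proved an inequality with the constant $c/(1-c)$, $c=\varphi_{K,n}(1/\sqrt 2)^2$, which you cannot identify with the stated constant $(1-a)/a$, $a=\varphi_{1/K,n}(1/\sqrt 2)^2$; worse, you cannot even decide whether your bound implies the stated one, since $\frac{c}{1-c}\leq\frac{1-a}{a}$ holds if and only if $a+c\leq 1$, exactly the unproved identity (as an inequality).

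The mismatch comes from running the quasiconformality estimate in the opposite direction from the paper. You placed the exactly computable ring (capacity $\tau_n(1)$) in the domain and lower-bounded the uncontrolled image ring by Teichm\"uller extremality. The paper does the reverse: for the reduced map $g$ with $g(0)=te$, the exactly computable configuration sits in the image, namely the segment $[g(0),z]$, $z\in\partial B^n$ with $g(0)\in[0,z]$, against the ray $E'=\{-sz:s\geq 1\}$, whose modulus is exactly $\tau_n\left(\frac{1+t}{1-t}\right)$; the preimage family joins $g^{-1}[g(0),z]$, a continuum containing $0$ and $z$, to the same (pointwise fixed) ray $E'$, and spherical symmetrization gives the lower bound $\tau_n(1)$. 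Then $\tau_n(1)\leq M(\Gamma)\leq K\,M(\Gamma')=K\tau_n\left(\frac{1+t}{1-t}\right)$ yields directly $\frac{1+t}{1-t}\leq\tau_n^{-1}(\tau_n(1)/K)=\frac{1-a}{a}$ by (\ref{aform}), with no Pythagorean identity needed. If you swap the roles of your two rings in this way, your argument becomes the paper's proof and the gap closes.
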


\begin{theorem} \label{mycor}
If $f\in Id_K(\partial B^n)$, then for all $x\in B^n, n\ge 2,$ and
$K\in[1,17]$
\begin{equation} \label{luckynumber}
  |f(x) -x| \le \frac{9}{2} (K-1) \,.
\end{equation}

For $n=2$ and $K>1$ we have
\begin{equation} \label{n=2}
|f(x)-x|\leqslant\frac{b}{2}(K-1),\quad b\leqslant 4.38.
\end{equation}
\end{theorem}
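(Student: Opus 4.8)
The plan is to derive the explicit linear estimate in Theorem~\ref{mycor} from the hyperbolic displacement bound of Theorem~\ref{main_theorem}. First I would fix $x\in B^n$ and set $t=\rho_{B^n}(f(x),x)$. Theorem~\ref{main_theorem} gives $t\le\log\frac{1-a}{a}$ with $a=\varphi_{1/K,n}(1/\sqrt2)^2$, which is equivalent to $\tanh(t/2)\le 1-2a$. Now I would convert the hyperbolic distance into the Euclidean distance $|f(x)-x|$. Since $f(x),x\in B^n$, the standard relation between the hyperbolic metric $\rho_{B^n}$ and the Euclidean distance (see Lemma~\ref{hypmetr}) lets one bound $|f(x)-x|$ in terms of $\tanh(t/2)$; in the worst case, points near the boundary, the Euclidean diameter is at most $2$, so I expect an estimate of the shape $|f(x)-x|\le C\,\tanh(t/2)\le C(1-2a)$ for an absolute constant $C$ (morally $C=2$, coming from the diameter of $B^n$).

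Next I would estimate $1-2a=1-2\varphi_{1/K,n}(1/\sqrt2)^2$ as $K\to1^+$ and, more quantitatively, on the whole range $K\in[1,17]$. The key analytic input is the behaviour of the distortion function $\varphi_{K,n}$ near $K=1$: since $\varphi_{1,n}(r)=r$, we have $\varphi_{1/K,n}(1/\sqrt2)\to 1/\sqrt2$ and hence $a\to 1/2$, $1-2a\to 0$ as $K\to1$. The heart of the matter is a one-sided linear bound $1-2\varphi_{1/K,n}(1/\sqrt2)^2\le c\,(K-1)$ valid for $K\in[1,17]$, with an explicit $c$; combined with the factor $C$ from the geometric step, this must be arranged so that the product $C\cdot c$ does not exceed $9/2$. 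I would obtain such a bound by using known monotonicity and explicit two-sided estimates for $\varphi_{K,n}$ (the same type of machinery underlying Lemma~\ref{krzyzbound}), differentiating or using a mean-value/convexity argument in the parameter $K$ on the compact interval $[1,17]$ to control the Lipschitz constant of $K\mapsto\varphi_{1/K,n}(1/\sqrt2)^2$ there.

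For the planar case $n=2$ and general $K>1$, I would instead invoke the sharper two-dimensional information. Here $\varphi_{K,2}$ is the classical Hersch--Pfluger distortion function, for which $\varphi_{1/K,2}(1/\sqrt2)$ has a clean closed form in terms of the modulus $\mu$; this is precisely the quantity appearing in Krzy\.z's Theorem~\ref{krzyz0} and estimated in Lemma~\ref{krzyzbound}. Using Lemma~\ref{krzyzbound}, which bounds the constant $c_1=\mu^{-1}\!\left(\log\frac{\sqrt K+1}{\sqrt K-1}\right)$ from above by $2\frac{K-1}{\sqrt K+1}$, together with the identity relating $c_1$ to $1-2a$, I would feed the resulting estimate through the geometric conversion to get $|f(x)-x|\le\frac{b}{2}(K-1)$ and then certify numerically that the implied constant satisfies $b\le 4.38$ for all $K>1$.

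The main obstacle is the uniform control of the special function in the passage $K\to1^+$: turning the asymptotically correct statement $1-2\varphi_{1/K,n}(1/\sqrt2)^2=O(K-1)$ into an explicit one-sided Lipschitz bound that is valid on the \emph{entire} interval $[1,17]$ and dimension-independent, and tracking constants carefully enough that the final numerical coefficient comes out as the stated $9/2$ (and $4.38$ in the plane). I expect this to rest on the monotonicity and convexity properties of $\varphi_{K,n}$ in $K$, so that the worst slope is attained at an endpoint and can be estimated explicitly; the geometric reduction $|f(x)-x|\le 2\tanh(\rho_{B^n}(f(x),x)/2)$ is routine by comparison, and the rest is careful bookkeeping of the constants.
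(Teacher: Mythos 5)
Your outline does follow the paper's broad strategy (Theorem \ref{main_theorem}, then a hyperbolic-to-Euclidean conversion, then a linear estimate for the special function), but your conversion step loses a factor of $2$ that the argument cannot afford, and this breaks the main case $n\ge 3$. You pass from $\rho_{B^n}(f(x),x)\le\log\frac{1-a}{a}$ to $|f(x)-x|\le 2\tanh\bigl(\rho_{B^n}(f(x),x)/2\bigr)\le 2(1-2a)$, so to reach (\ref{luckynumber}) you must prove $1-2a\le\frac{9}{4}(K-1)$ for all $n\ge 2$ and $K\in[1,17]$. The paper instead uses the sharper half of Lemma \ref{hypmetr}, namely (\ref{tanh}): $|x-y|\le 2\tanh\bigl(\rho_{B^n}(x,y)/4\bigr)$ (note $\rho/4$, not $\rho/2$; for small $\rho$ this is better by exactly a factor $2$, and small $\rho$ is the relevant regime as $K\to 1$). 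With (\ref{tanh}) one only needs $\log\frac{1-a}{a}\le(4+6\log 2)(K-1)$, which is precisely what Lemma \ref{stabrmk} supplies, and then $|f(x)-x|\le 2\tanh\bigl((4+6\log 2)(K-1)/4\bigr)\le(2+3\log 2)(K-1)<\frac{9}{2}(K-1)$. Feeding that same input (\ref{remark_inequality}) into your conversion gives only $1-2a=\tanh\bigl(\frac12\log\frac{1-a}{a}\bigr)\le(2+3\log 2)(K-1)$, hence $|f(x)-x|\le(4+6\log 2)(K-1)\approx 8.16\,(K-1)$, which misses the target. The bound you would actually need, $1-2a\le\frac94(K-1)$, is strictly stronger than anything established in the paper: near $K=1$ it forces the dimension-free estimate $-\tau_n(1)/\tau_n'(1)\le\frac92$; for $n=2$ this derivative equals $b=(4/\pi)\K(1/\sqrt2)^2\approx 4.377$, so even in the plane your margin is only a few percent, and for $n\ge 3$ no such estimate is proved in the paper or supplied by you. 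In particular, Lemma \ref{krzyzbound}, which you invoke as the model for the needed machinery, is built on $\mu$ and is strictly two-dimensional.

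A second, related gap: the dimension-free linear bound on $[1,17]$ is not ``careful bookkeeping''; it is the technical core of the proof, and your appeal to monotonicity/convexity of $K\mapsto\varphi_{1/K,n}(1/\sqrt2)^2$ on a prescribed interval $[1,17]$ both assumes a convexity property that is nowhere established and presupposes the interval instead of deriving it. The paper gets this bound by combining (\ref{7472})--(\ref{7502}), which give $1/a\le 2^{3K-2}K^{2K}$ uniformly in $n$ via the Gr\"otzsch ring constant $\lambda_n$, with Lemma \ref{mn_lemma} (concavity of $\log(2^{mx-m+1}x^{nx}-1)$ near $x=1$ plus the iteration $a_{k+1}=p^{-1}(q(a_k))$, with $m=3$, $n=2$, and $a_{36}>17$); that iteration is where the number $17$ comes from. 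By contrast, your plan for the planar case does close and is genuinely different from the paper's: Krzy\.z's inequality (\ref{krzyz2}) together with Lemma \ref{krzyzbound} and the conversion $|f(x)-x|\le 2\tanh\bigl(\rho_{B^2}(f(x),x)/2\bigr)\le 2c_1$ yields $|f(x)-x|<4(K-1)/(\sqrt K+1)\le 2(K-1)$, which is even slightly better than (\ref{n=2}), whereas the paper derives (\ref{n=2}) from the identity (\ref{eta1}) and the bound (\ref{eta1b}).
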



The theory of $K$-quasiregular mappings in ${\mathbb R}^n, n \ge 3,$
with maximal dilatation $K$ close to $1\,$ has been extensively
studied by Yu. G. Reshetnyak \cite{r} under the name "stability
theory". By Liouville's theorem we expect that when  $n \ge 3$ is fixed and
$K\to 1$ the $K$-quasiregular maps "stabilize", become more and more
like M\"obius transformations, and this is the content of the deep
main results of \cite{r} such as \cite[p. 286]{r}. We have been
unable to decide whether Theorem \ref{main_theorem} follows from
Reshetnyak's stability theory in a simple way.
V. I. Semenov \cite{s} has also made significant contributions to this
theory. For the plane case, P. P. Belinskii  has found several
sharp results in \cite{be}.

Finally, it seems to be an open problem whether a new kind of stability
behavior holds: If $K>1$ is fixed, do maps in $Id_K(\partial B^n)$ approach
identity when $n \to \infty$? Our results do not answer this question. This
kind of behavior is anticipated in \cite[Open problem 9, p. 478]{avv}.


\section{Preliminary results}

We shall follow the terminology of \cite{v2}, where for instance the
moduli of curve families are discussed. For the hyperbolic metric
$\rho_{B^n}$ of the unit ball $B^n$ our
main reference is \cite{b}. In the next lemma we give a useful estimate
(\ref{tanh}) for it. Some applications of (\ref{tanh}) were given in \cite[pp.141-142]{vu2}. Very
recently, Earle and Harris \cite{eh} have given several applications and
extended this inequality to other metrics such as the
Carath\'eodory metric.
\begin{lemma} \label{hypmetr}
For $x,y\in B^n$ let $t=\sqrt{(1-|x|^2)(1-|y|^2)}$. Then 
\begin{equation} \label{tanhid}
\tanh^2\frac{\rho_{B^n}(x,y)}2=\frac{|x-y|^2}{|x-y|^2+t^2}\, ,
\end{equation}
\begin{equation}
\label{tanh} |x-y|\leqslant 2\tanh\frac{\rho_{B^n}(x,y)}{4}=
\frac{2|x-y|}{\sqrt{|x-y|^2+t^2}+t}\, ,
\end{equation}
where equality holds for $x=-y$.
\end{lemma}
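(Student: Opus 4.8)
The plan is to start from the standard closed form for the hyperbolic distance on $B^n$, namely
$$\sinh^2\frac{\rho_{B^n}(x,y)}{2}=\frac{|x-y|^2}{(1-|x|^2)(1-|y|^2)}=\frac{|x-y|^2}{t^2},$$
which can be found in \cite{b}. The identity (\ref{tanhid}) then follows immediately from the elementary relation $\tanh^2 u=\sinh^2 u/(1+\sinh^2 u)$ applied with $u=\rho_{B^n}(x,y)/2$: substituting $\sinh^2 u=|x-y|^2/t^2$ and clearing the factor $t^{-2}$ from numerator and denominator gives $|x-y|^2/(|x-y|^2+t^2)$, which is (\ref{tanhid}).

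For the formula in (\ref{tanh}) I would pass from the half argument $\rho/2$ to the quarter argument $\rho/4$ by the half-angle identity $\tanh(u/2)=(1-\sqrt{1-\tanh^2 u})/\tanh u$. Writing $T=\tanh(\rho_{B^n}(x,y)/2)$, formula (\ref{tanhid}) gives $T=|x-y|/\sqrt{|x-y|^2+t^2}$ and $\sqrt{1-T^2}=t/\sqrt{|x-y|^2+t^2}$, so that
$$\tanh\frac{\rho_{B^n}(x,y)}{4}=\frac{1-\sqrt{1-T^2}}{T}=\frac{\sqrt{|x-y|^2+t^2}-t}{|x-y|}.$$
Multiplying numerator and denominator by $\sqrt{|x-y|^2+t^2}+t$ and using $(\sqrt{|x-y|^2+t^2}-t)(\sqrt{|x-y|^2+t^2}+t)=|x-y|^2$ rationalizes the right-hand side to $|x-y|/(\sqrt{|x-y|^2+t^2}+t)$, which is exactly half of the middle term of (\ref{tanh}).

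Finally, to establish the inequality $|x-y|\le 2\tanh(\rho_{B^n}(x,y)/4)$ I would use the closed form just obtained to rewrite it as the equivalent claim $\sqrt{|x-y|^2+t^2}+t\le 2$ (the case $x=y$ being trivial). The key step is the algebraic identity
$$|x-y|^2+t^2=1-2\,x\cdot y+|x|^2|y|^2,$$
obtained by expanding $|x-y|^2$ together with $t^2=(1-|x|^2)(1-|y|^2)$ and cancelling the terms $|x|^2$ and $|y|^2$. From the Cauchy--Schwarz bound $x\cdot y\ge -|x||y|$ this yields $\sqrt{|x-y|^2+t^2}\le 1+|x||y|$, while the arithmetic--geometric mean inequality gives $t\le 1-\tfrac12(|x|^2+|y|^2)$; adding the two bounds produces
$$\sqrt{|x-y|^2+t^2}+t\le 2-\tfrac12(|x|-|y|)^2\le 2.$$
Both intermediate estimates are equalities precisely when $|x|=|y|$ and $x\cdot y=-|x||y|$, that is, when $x=-y$, which is the asserted equality case. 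I expect the only mildly delicate point to be the bookkeeping in the half-angle rationalization; the inequality itself is then a short consequence of the displayed identity, so no serious obstacle arises.
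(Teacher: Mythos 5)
Your proof is correct, but it is genuinely different in character from what the paper does: the paper disposes of Lemma \ref{hypmetr} purely by citation, referring to \cite[p.~40]{b} for (\ref{tanhid}) and to \cite[(2.18), 2.27]{vu2} for (\ref{tanh}), whereas you give a self-contained derivation. Your route --- starting from the closed form $\sinh^2(\rho_{B^n}(x,y)/2)=|x-y|^2/t^2$, converting to $\tanh$ via $\tanh^2 u=\sinh^2 u/(1+\sinh^2 u)$, passing to the quarter argument with the half-angle identity $\tanh(u/2)=(1-\sqrt{1-\tanh^2 u})/\tanh u$, and then proving $\sqrt{|x-y|^2+t^2}+t\le 2$ from the identity $|x-y|^2+t^2=1-2\,x\cdot y+|x|^2|y|^2$ together with Cauchy--Schwarz and AM--GM --- is sound at every step, and it has the advantage of making the lemma verifiable without consulting \cite{b} and \cite{vu2} (beyond the standard $\sinh$ formula for the hyperbolic distance). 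One small imprecision: your closing claim that equality occurs \emph{precisely} when $x=-y$ concerns the chain $\sqrt{|x-y|^2+t^2}+t\le 2$, which you derived under the assumption $x\ne y$; in the inequality (\ref{tanh}) itself, equality also holds trivially when $x=y$ (both sides vanish). Since the lemma only asserts that equality holds \emph{for} $x=-y$, this does not affect correctness, but the word ``precisely'' should be qualified.
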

\begin{proof}
For (\ref{tanhid}) see \cite[p. 40]{b}, for
(\ref{tanh}) see \cite[(2.18), 2.27]{vu2}.
\end{proof}

Next, we consider a decreasing
homeomorphism $\mu:(0,1)\longrightarrow(0,\infty)$ defined by
\begin{equation}
\label{dec_hom} \mu(r)=\frac\pi 2\,\frac{{\K}(r')}{{\K}(r)}, \quad {\K}(r)=\int_0^1\frac{dx}{\sqrt{(1-x^2)(1-r^2x^2)}}\, ,
\end{equation}
where ${\K}(r)$ is Legendre's complete elliptic integral
of the first kind and $ r'=\sqrt{1-r^2},$
for all $r\in(0,1)$.

The Hersch-Pfluger distortion function is an increasing
homeomorphism $\varphi_K:(0,1)\longrightarrow(0,1)$ defined by
\begin{equation} \label{phidef}
\varphi_K(r)=\mu^{-1}(\mu(r)/K)
\end{equation}
for all $r\in(0,1)$, $K>0$. By continuity we set $\varphi_K(0)=0$,
$\varphi_K(1)=1$. From (\ref{dec_hom}) we see that
$\mu(r)\mu(r')=\left(\frac{\pi}2\right)^2$ and from this we are able
to conclude a number of properties of $\varphi_K$. For instance, by
\cite[Thm 10.5, p. 204]{avv}
\begin{equation} \label{phipyth}
\varphi_K(r)^2+\varphi_{1/K}(r')^2=1,\quad r'=\sqrt{1-r^2},
\end{equation}
holds for all $K>0$, $r\in(0,1)$.

\begin{subsec}{\bf
Proof of Lemma \ref{krzyzbound}.} By \cite[(5.27)]{avv} we have for
$y>0 $
$$  \sqrt{1- \tanh^2 y}  < \sqrt{1- \tanh^8 y}< \mu^{-1}(y)< 4 e^{-y}\, . $$
With
$$ y=
\log \frac{\sqrt{K}+1}{\sqrt{K}-1} = 2 {\rm artanh} (1/\sqrt{K})
$$
this inequality yields
$$ \frac{\sqrt{K}-1}{K+1}< c_1= \mu^{-1}(y) <  4 \frac{\sqrt{K}-1}{\sqrt{K}+1} <2 \frac{K-1}{\sqrt{K}+1}\, .
\quad \quad\square$$
\end{subsec}

\begin{subsec}{The Gr\"otzsch and Teichm\"uller rings.}
The Gr\"otzsch and Teichm\"uller ring domains $R_{G}(s), s>1,$ and
$R_T(t), t>0,$ are doubly connected domains with complementary
components $(\overline{B}^n, [se_1,\infty))$ and $([-e_1,0], [te_1,\infty)),$
respectively. Their capacities $ {\rm cap} R_G(s) $ and ${\rm cap} R_T(t)$
are often used below.
The Gr\" otzsch capacity  $\gamma_n(s) =  {\rm cap} R_G(s) $
is a decreasing homeomorphism
$\gamma_n:(1,\infty)\longrightarrow(0,\infty)$
see \cite[p.66]{vu2}, \cite[Section 8]{avv}.
The Teichm\"uller capacity $\tau_n(t)={\rm cap} R_T(t)$,
is a decreasing homeomorphism
$\tau_n: (0,\infty) \to (0,\infty)$  connected with $\gamma_n$
by the identity
\begin{equation} \label{groandteich}
\tau_n(t) = 2^{1-n} \gamma_n(\sqrt{1+t}),\, t >0.
\end{equation}
\end{subsec}

Given $E,F, G  \subset {\mathbb R}^n$
we use the notation $\Delta(E,F;G)$ for the family of all curves
that join the sets $E$ and $F$ in $G\,$ and $M(\Delta(E,F;G))$ for
its modulus, see \cite[Chapter I]{v2}. Then $\tau_n(t)= M(\Delta(E,F;{\mathbb R}^n)) $ where $E$ and $F$ are the complementary components of the Teichm\"uller
ring and a similar relation also holds for $\gamma_n(s).$

We use the standard notation
\begin{equation} \label{phindef}
\varphi_{K,n}(r)=\frac 1{\gamma_n^{-1}(K \gamma_n(1/r))}.
\end{equation}
Then $\varphi_{K,n}:(0,1)\longrightarrow(0,1)$ is an increasing homeomorphism,
see \cite[(7.44)]{vu2}. Because $\gamma_2(1/r) = 2 \pi/\mu(r)$ by
\cite[(5.56)]{vu2}, \cite{lv}, it follows that $\varphi_{K,2}(r)$ is
the same as the function $\varphi_{K}(r)$ in (\ref{phidef}).

\begin{subsec}{The key constant.} The special functions introduced
above will have a crucial role in what follows. For the sake of easy
reference we give here some well-known identities between them that
can be found in \cite{avv}. First, the function
\begin{equation}
\eta_{K,n}(t)= \tau_n^{-1}(\tau_n(t)/K) =
\frac{1-\varphi_{1/K,n}(1/\sqrt{1+t})^2}{\varphi_{1/K,n}(1/\sqrt{1+t})^2},
\, K
>0\,,
\end{equation}
defines an increasing homeomorphism $\eta_{K,n}: (0,\infty)\to (0,\infty)\,$(cf.
\cite[p.193]{avv}). The constant $(1-a)/a,
a=\varphi_{1/K,n}(1/\sqrt{2})^2,$ in Theorem \ref{main_theorem} can be
expressed as follows for $K>1$
\begin{equation}
\label{aform} (1-a)/a = \eta_{K,n}(1) =\tau_n^{-1}(\tau_n(1)/K) \, .
\end{equation}
Furthermore, by (\ref{phipyth}) 
\begin{equation} \label{eta1}
  \eta_{K,2}(t) = \frac{s^2}{1-s^2} ,
  \quad s= \varphi_{K,2}(\sqrt{t/(1+t)})\,
\end{equation}
and
\begin{equation} \label{eta1b}
  \eta_{K,2}(1) \in (e^{\pi(K-1)} , e^{b(K-1)})
\end{equation}
where $b= (4/\pi) {\K}(1/\sqrt{2})^2 = 4.376879...$ Note that the
constant $\lambda(K)$ in \cite[10.33]{avv} is the same as
$\eta_{K,2}(1)\,.$ In passing we remark that P. P. Belinskii gave in
\cite[Lemma 12, p. 80]{be} the inequality
$$\eta_{K,2}(1) \equiv \lambda(K) <1+ 12(K-1)$$
for $K$ close to $1\,,$ however, with an incorrect proof as pointed out in \cite[(3.10)]{aqv}.
Because this inequality is one of the key technical estimates of \cite{be}, it is fortunate
that this error was detected and a correct proof was later found (see \cite[Corollary 3.7]{aqv}).
\end{subsec}   

 For the proof of Lemma \ref{stabrmk}, we record a lower        
bound for $\varphi_{1/K,n}(r)\,.$ The constant $\lambda_n \in [4, 2 e^{n-1})$ 
is the so called Gr\"otzsch ring constant, see \cite{avv}.

\begin{lemma} (\cite[7.47, 7.50]{vu2}) \label{cgqm747} For $n \ge 2, K \ge
1,$ and $0\le r \le 1$
\begin{equation}
\varphi_{1/K,n}(r) \ge \lambda_n^{1- \beta} r^{\beta}, \, \, \beta=
K^{1/(n-1)}, \label{7472}
\end{equation}
\begin{equation}  \label{7502}
\lambda_n^{1- \beta}\ge 2^{1-\beta} K^{-\beta} \ge
2^{1-K} K^{-K} \,.
\end{equation}
\end{lemma}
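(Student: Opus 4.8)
The plan is to treat the two displayed inequalities separately, since (\ref{7502}) is purely elementary while (\ref{7472}) is the analytic heart and, being the higher-dimensional Hölder (Schwarz) estimate for $\varphi_{K,n}$, is best reduced to the standard distortion bound for $\varphi_{K,n}$ itself.

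For (\ref{7472}) I would first record the semigroup structure hidden in (\ref{phindef}): writing $\varphi_{K,n}(r)=1/\gamma_n^{-1}(K\gamma_n(1/r))$ one checks directly that $\varphi_{K_1,n}\circ\varphi_{K_2,n}=\varphi_{K_1K_2,n}$, and in particular $\varphi_{1,n}=\mathrm{id}$, so that $\varphi_{1/K,n}=\varphi_{K,n}^{-1}$. Hence, applying the increasing homeomorphism $\varphi_{K,n}$ to both sides, the claimed bound $\varphi_{1/K,n}(r)\ge\lambda_n^{1-\beta}r^{\beta}$ is equivalent to
$$\varphi_{K,n}\bigl(\lambda_n^{1-\beta}r^{\beta}\bigr)\le r.$$
Now I would invoke the classical distortion estimate $\varphi_{K,n}(x)\le\lambda_n^{1-\alpha}x^{\alpha}$ for $K\ge1$, with $\alpha=K^{1/(1-n)}=1/\beta$. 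Substituting $x=\lambda_n^{1-\beta}r^{\beta}$ and using $\alpha\beta=1$, the exponent of $\lambda_n$ collapses to $(1-\alpha)+(1-\beta)\alpha=1-\alpha\beta=0$ and that of $r$ to $\alpha\beta=1$, so the right-hand side is exactly $r$; the boundary cases $r\in\{0,1\}$ and the check $x\in(0,1)$ (which holds since $\lambda_n^{1-\beta}\le1$ and $r^{\beta}\le r$) are immediate. Thus (\ref{7472}) is equivalent to this distortion bound.

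The main obstacle is the distortion bound itself, i.e. the appearance of exactly the Grötzsch constant $\lambda_n$ in the multiplicative factor. Unwinding it through $\gamma_n$, it amounts to the capacity inequality $\gamma_n(\lambda_n^{\beta-1}s^{\beta})\le K^{-1}\gamma_n(s)$ for $s>1$; writing $s=e^{v}/\lambda_n$ and $\Phi(v)=(\omega_{n-1}/\gamma_n(e^{v}/\lambda_n))^{1/(n-1)}$ this becomes the star-shapedness inequality $\Phi(\beta v)\ge\beta\,\Phi(v)$ for $\beta\ge1$, equivalently the monotonicity of $v\mapsto\Phi(v)/v$. This is precisely the (nontrivial) monotonicity property of the Grötzsch capacity that encodes the definition of $\lambda_n$; I would quote it from \cite{avv}, \cite{vu2} rather than reprove it, since it rests on the symmetrization and monotonicity machinery for moduli of curve families rather than on anything available in the present excerpt.

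Finally (\ref{7502}) is elementary and uses only $\beta=K^{1/(n-1)}\in[1,K]$ (valid for $n\ge2,\,K\ge1$) together with the bound $\lambda_n<2e^{n-1}$. Since $1-\beta\le0$, the estimate $\lambda_n<2e^{n-1}$ gives $\lambda_n^{1-\beta}\ge(2e^{n-1})^{1-\beta}=2^{1-\beta}e^{(n-1)(1-\beta)}$, and because $\log K=(n-1)\log\beta$ the inequality $(2e^{n-1})^{1-\beta}\ge2^{1-\beta}K^{-\beta}$ reduces after dividing by $n-1$ to $1-\beta+\beta\log\beta\ge0$, which holds since $\phi(\beta)=1-\beta+\beta\log\beta$ satisfies $\phi(1)=0$ and $\phi'(\beta)=\log\beta\ge0$ for $\beta\ge1$. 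For the last inequality I write $2^{1-\beta}K^{-\beta}=2(2K)^{-\beta}$; as $2K>1$ this is decreasing in $\beta$, so $\beta\le K$ yields $2(2K)^{-\beta}\ge2(2K)^{-K}=2^{1-K}K^{-K}$, which completes (\ref{7502}).
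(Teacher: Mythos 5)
Your proposal is correct, and it takes a genuinely different route from the paper, which offers no proof at all: the lemma is simply quoted from \cite[7.47, 7.50]{vu2}. Your treatment of (\ref{7502}) is a complete, self-contained calculus argument using only the bound $\lambda_n < 2e^{n-1}$ (stated in the paper just before the lemma), the identity $\log K=(n-1)\log\beta$, the inequality $1-\beta+\beta\log\beta\ge 0$ for $\beta\ge 1$, and the monotonicity of $\beta\mapsto (2K)^{-\beta}$ together with $\beta\le K$; every step checks out. Your treatment of (\ref{7472}) is also correct: the semigroup identity $\varphi_{K_1,n}\circ\varphi_{K_2,n}=\varphi_{K_1K_2,n}$ (hence $\varphi_{1/K,n}=\varphi_{K,n}^{-1}$) and the exponent cancellation $(1-\alpha)+(1-\beta)\alpha=0$, $\alpha\beta=1$, reduce it cleanly to the companion upper bound $\varphi_{K,n}(x)\le\lambda_n^{1-\alpha}x^{\alpha}$, $\alpha=K^{1/(1-n)}$. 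Be aware, though, that in the cited sources this upper bound is the other half of the very same theorem, proved by the same symmetrization and monotone-function machinery; so your argument establishes the equivalence of the two halves rather than an independent proof, and the deep input (your star-shapedness statement, i.e.\ the monotonicity of $v\mapsto\Phi(v)/v$ for the Gr\"otzsch capacity) must still be quoted, exactly as you acknowledge. The net comparison: the paper's citation is maximally economical, while your derivation isolates precisely which part of the lemma is deep (the H\"older-type distortion bound carrying the Gr\"otzsch constant $\lambda_n$) and makes transparent that all of (\ref{7502}) is elementary.
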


In the next lemma we consider two strictly increasing continuous functions
$p,q:[1,\infty) \to (0,\infty)$ such that $p(1)<q(1)$ and that the
opposite inequality $p(x_1) >q(x_1)$ holds for some $x_1>1\,.$
In the first part of the lemma we find, for the given functions,
a concrete value $\varepsilon>0$ such that $p(x)<q(x)$
for all $x \in [1,1+\varepsilon)\,.$ In the second part of the lemma
we apply an iterative method with $1+\varepsilon$ as a starting value
to find the largest number $a \in [1+\varepsilon,x_1)$ such that
$p(x)<q(x)$ for all $x\in [1,a)\,$ and show that $a>17\,.$

\begin{lemma}
\label{mn_lemma}
\begin{enumerate}
\item
For all $m,n\geqslant 1$ there is $M>1$ such that the inequality
\begin{equation}
\label{lemas_inequality}
\log(2^{mx-m+1}x^{nx}-1)\leqslant(2m\log 2+2n)(x-1)
\end{equation}
holds for $x\in[1,M]$ with equality only for $x=1$. Moreover, with
$t= (m\log 2-n)/(2n)\,,$ $M$ can be chosen as
$$
M=\sqrt{\frac{(m-1)\log 2+\log\left(1+\frac{(n+m\log
2)^2}{n}\right)}{n}+ t^2}-t.
$$
\item
Let $p(x)=\log(2^{mx-m+1}x^{nx}-1)$, $q(x)=(2m\log 2+2n)(x-1)$ and
let us use the above notation.
Let $a_0=M$ and $a_{n+1}=p^{-1}(q(a_n))$ for $n\geqslant 1$.
Then the sequence $a_n$ is increasing and bounded.
If $a=\lim_{n\rightarrow \infty}a_n$ then the inequality (\ref{lemas_inequality})
holds for $x\in[1,a]$ with equality iff $x\in\{1,a\}$.
For $m=3$ and $n=2$ we have $a>17$.
\end{enumerate}
\end{lemma}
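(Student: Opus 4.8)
The plan is to reduce (\ref{lemas_inequality}) to a statement about $v(x)=2^{mx-m+1}x^{nx}$ and to exploit the elementary estimate $\log x\le x-1$. Exponentiating, $p(x)\le q(x)$ is equivalent to $v(x)-1\le e^{q(x)}$, and writing $d=m\log2+n$ (so that $q(x)=2d(x-1)$) this reads $v(x)\le 1+e^{2d(x-1)}$. First I record the data at $x=1$: since $v(1)=2$ one has $p(1)=q(1)=0$, and differentiating gives $p'(1)=q'(1)=2d$, so the two sides are tangent to first order. The decisive second-order fact is that $\Phi(x):=1+e^{2d(x-1)}-v(x)$ satisfies $\Phi(1)=\Phi'(1)=0$ and $\Phi''(1)=2(d^2-n)>0$, because $d^2=(m\log2+n)^2>n$ for all $m,n\ge1$; hence $\Phi>0$ strictly on a right neighbourhood of $1$, which accounts for the clause ``equality only for $x=1$''.

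To pin down the explicit threshold I would insert $\log x\le x-1$ into the identity $\log v(x)=\log2+m(x-1)\log2+nx\log x$ to obtain the quadratic majorant
\[ \log v(x)\le Q(x):=nx^2+(m\log2-n)x+\log2-m\log2 . \]
Here $Q'(x)=2nx+(m\log2-n)\ge d>0$ for $x\ge1$, so $Q$ is strictly increasing on $[1,\infty)$. A short computation identifies the stated value of $M$ (with $t=(m\log2-n)/(2n)$) as the unique solution of $Q(M)=\log(1+d^2/n)$, and shows $M>1$ precisely because $d^2>n$. Consequently $v(x)\le e^{Q(x)}\le e^{Q(M)}=1+d^2/n$, so that $v(x)-1\le d^2/n$ holds uniformly on $[1,M]$.

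It then remains to upgrade the uniform bound $v(x)-1\le d^2/n$ to the target $v(x)-1\le e^{2d(x-1)}$ on all of $[1,M]$. For $x\ge 1+\log(d^2/n)/(2d)$ this is automatic, since there $e^{2d(x-1)}\ge d^2/n$; near $x=1$ it is supplied by the tangency together with $\Phi''(1)>0$ from the first paragraph. I expect the junction of these two regimes to be the main technical obstacle, since one must verify that $\Phi$ stays nonnegative throughout the intermediate range rather than merely at the two ends. One natural route is to establish $\Phi'(x)\ge0$ on $[1,M]$, so that $\Phi$ increases from $\Phi(1)=0$; this amounts to comparing $2d\,e^{2d(x-1)}$ with $v(x)(d+n\log x)$ and requires retaining the genuine exponential growth of $e^{2d(x-1)}$ rather than the crude bound $e^{2d(x-1)}\ge1$.

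For part (2), $p$ and $q$ are continuous and strictly increasing with $p(1)=q(1)=0$; part (1) gives $p<q$ on $(1,M]$, while $p(x)$ grows like $nx\log x$ and eventually overtakes the linear $q$, so $p(x_1)>q(x_1)$ for large $x_1$ and there is a first crossing $a\in(M,x_1)$ with $p(a)=q(a)$ and $p<q$ on $[1,a)$. Since $p^{-1}$ is increasing, $q(a_k)>p(a_k)$ yields $a_{k+1}=p^{-1}(q(a_k))>a_k$, while $a_k<a$ forces $q(a_k)<q(a)=p(a)$ and hence $a_{k+1}<a$; thus $\{a_k\}$ is increasing and bounded above by $a$, and from $a_0=M<a$ it converges to a fixed point $\ell$ of $x\mapsto p^{-1}(q(x))$, i.e. $p(\ell)=q(\ell)$. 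As $a$ is the only root of $p=q$ in $(1,a]$, the limit is $\ell=a$, which gives (\ref{lemas_inequality}) on $[1,a]$ with equality exactly at $x\in\{1,a\}$. Finally, for $m=3$, $n=2$ I would run this monotone iteration from $a_0=M$ numerically; the bound $a>17$ is then a finite verification that the increasing iterates pass $17$, which I expect to be routine once the monotone convergence is established.
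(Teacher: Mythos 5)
Your part (1) has a genuine gap, and you name it yourself: the ``junction'' of the two regimes is never closed. The tangency computation ($\Phi(1)=\Phi'(1)=0$, $\Phi''(1)=2(d^2-n)>0$) only gives $\Phi>0$ on some \emph{uncontrolled} right neighbourhood of $1$, while the uniform bound $v(x)-1\le d^2/n$ only gives $\Phi\ge 0$ for $x\ge 1+\log(d^2/n)/(2d)$; nothing in your argument covers the intermediate range, and the proposed repair (proving $\Phi'\ge 0$ on $[1,M]$) is left entirely unexecuted. So the proof of (\ref{lemas_inequality}) on all of $[1,M]$, including the strictness claim for $x>1$, is incomplete as written.

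The frustrating part is that you already hold the missing piece. The paper works with $p(x)=\log(v(x)-1)$ itself and observes, by a direct computation of $p''$, that
\begin{equation*}
p''(x)\le 0 \quad\Longleftrightarrow\quad v(x)-1\;\le\; \frac{x}{n}\bigl(n+m\log 2+n\log x\bigr)^2 \;=:\; g(x),
\end{equation*}
with $g$ increasing on $[1,\infty)$ and $g(1)=d^2/n$. Your uniform bound is precisely $v(x)-1\le g(1)$ on $[1,M]$ (indeed your equation $Q(M)=\log(1+d^2/n)$ is the paper's defining quadratic for $M$), so $v(x)-1\le g(1)\le g(x)$ there, i.e.\ $p$ is concave on $[1,M]$, strictly so for $x>1$ since $g(1)<g(x)$. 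Concavity combined with your own tangency data $p(1)=q(1)=0$, $p'(1)=q'(1)=2d$ instantly yields $p(x)\le p(1)+p'(1)(x-1)=q(x)$ on $[1,M]$, with equality only at $x=1$, and no junction problem ever arises: the point is to compare $p$ with its tangent line rather than to compare $v-1$ with $e^{2d(x-1)}$ head-on. Your part (2) (monotone iteration bounded above by the first crossing $a$ of $p=q$, convergence to that fixed point, finite numerical check that the iterates exceed $17$ for $m=3$, $n=2$) is correct and essentially identical to the paper's argument.
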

\begin{proof}
Let
$$
u(x)=(mx-m+1)\log 2+nx\log x,\quad v(x)=
\log(e^{u(x)}-1)=\log(2^{mx-m+1}x^{nx}-1).
$$
Then we have
$$
\begin{array}{rcl}
v''(x) & = &
\displaystyle
(\log(e^{u(x)}-1))'' =
\left(
\frac{u'(x)\,e^{u(x)}}{e^{u(x)}-1}
\right)'
\vspace{1em}\\
& = &
\displaystyle
\frac{(u''(x)e^{u(x)}+(u'(x))^2e^{u(x)})(e^{u(x)}-1)-(u'(x)\,e^{u(x)})^2}{(e^{u(x)}-1)^2}
\vspace{1em}\\
& = &
\displaystyle
\frac{e^{u(x)}}{(e^{u(x)}-1)^2}\cdot
((u''(x)+(u'(x))^2)(e^{u(x)}-1)-(u'(x))^2e^{u(x)})
\vspace{1em}\\
& = &
\displaystyle
\frac{e^{u(x)}}{(e^{u(x)}-1)^2}\cdot
(u''(x)(e^{u(x)}-1)-(u'(x))^2).
\end{array}
$$
Thus
$$
v''(x)\leqslant 0\,\,\,\Leftrightarrow\,\,\,u''(x)(e^{u(x)}-1)\leqslant(u'(x))^2.
$$
Since
$$
e^{u(x)}=2^{mx-m+1}x^{nx},\quad u'(x)=n+m\log 2+n\log x,\quad u''(x)=\frac nx,
$$
we have
$$
v''(x)\leqslant 0\,\,\,\Leftrightarrow\,\,\,\frac nx(2^{mx-m+1}x^{nx}-1)\leqslant(n+m\log 2+n\log x)^2,
$$
therefore $v''(x)\leqslant 0$ is for $x\geqslant 1$ equivalent to
$$
2^{mx-m+1}x^{nx}-1\leqslant\frac xn(n+m\log 2+n\log x)^2.
$$
Let $f(x)=2^{mx-m+1}x^{nx}-1$ and $g(x)=\frac xn(n+m\log 2+n\log x)^2$.
Both functions $f$ and $g$
are increasing on $[1,+\infty)$ and $f(1)<g(1)$ because
$$
f(1)=1\leqslant n=\frac 1n\cdot n^2<\frac 1n(n+m\log 2)^2=g(1).
$$
By the continuity of $f$ we can conclude that there is $M>1$ such that
$f(M)\leqslant g(1)$. For such $M$
$$
f(x)\leqslant f(M)\leqslant g(1)\leqslant g(x),\quad x\in[1,M].
$$
This implies that $v$ is concave on $[1,M]$ and therefore
$$
v(x)\leqslant v(1)+v'(1)(x-1),\quad x\in[1,M]
$$
i.e.
$$
\log(2^{mx-m+1}x^{nx}-1)\leqslant(2m\log 2+2n)(x-1),\quad x\in[1,M].
$$
The inequality $f(x)\leqslant g(1)$ is equivalent to
\begin{equation}
\label{log_form}
(mx-m+1)\log 2+nx\log x\leqslant\log\left(1+\frac{(n+m\log 2)^2}n\right).
\end{equation}
Because
\begin{equation}
\label{linearization}
(mx-m+1)\log 2+nx\log x\leqslant(mx-m+1)\log 2+nx(x-1)
\end{equation}
the inequality (\ref{log_form}) is a consequence of the inequality
\begin{equation}
\label{quad_form}
(mx-m+1)\log 2+nx(x-1)\leqslant\log\left(1+\frac{(n+m\log 2)^2}n\right).
\end{equation}
In (\ref{linearization}) the equality sign holds only for $x=1$. Because
$$
1+\frac{(n+m\log 2)^2}n>1+\frac{n^2}{n}=1+n\geqslant 2
$$
the inequality (\ref{quad_form}) is a strict inequality for $x=1$.
By this reason, the greater root of the quadratic equation
$$
(mx-m+1)\log 2+nx(x-1)=\log\left(1+\frac{(n+m\log 2)^2}n\right)
$$
is greater than $1$. If we denote this root with $M$ the inequality
(\ref{log_form}) holds for $x\in[1,M]$ with equality only for $x=1$.
The first part of Lemma is proved.

Now we prove the second part of the inequality. Both of the functions
$p(x)$ and $q(x)$ are continuous and increasing. Consequently
$r(x)=p^{-1}(x)$ is continuous and increasing. Because
$$
p(a_1)=q(a_0)>p(a_0)
$$
using monotonicity of $p(x)$ we can conclude that $a_1>a_0$.
Now, by induction and monotonicity of $r$ we can conclude that
the sequence $a_n$ is increasing. Now for $x\in[a_n,a_{n+1})$ we have
$$
p(x)<p(a_{n+1})=q(a_n)\leqslant q(x).
$$
Therefore the inequality $p(x)<q(x)$ holds for $x\in\bigcup_{n=0}^\infty[a_n,a_{n+1})=[a_0,a)$
and using what was already proved, we see that the inequality $p(x)<q(x)$ holds for the whole
interval $1<x<a$.
For $x\geqslant 1$ we see that $mx-m+1>1$ and $x^{nx}\geqslant 1$ and consequently
$$
p(x)=\log(2^{mx-m+1}x^{nx}-1)>\log(2\,x^{nx}-1)\geqslant nx\log x.
$$
Because $p(x)>nx\log x\geqslant(n\log x)(x-1)$ the inequality $p(c)>q(c)$
holds for $c$ such that $n\log c\geqslant 2m\log 2+2n$. It is
easy to check that it is true for $c=2^{\frac{2m}{n}}e^2$.
It implies that $a$ is finite (for example $a<2^{\frac{2m}{n}}e^2$) and $a_n$ is bounded.
The relation $p(a_{n+1})=q(a_n)$ and the continuity
of both functions shows that $\lim p(a_{n+1})= p(a)=q(a) = \lim q(a_n)\,.$
The lower bound for $a$ follows because $a_{36}>17\,.$
\end{proof}

\begin{lemma} \label{stabrmk}
If $a=\varphi_{1/K,n}(1/\sqrt{2})^2$ is as in Theorem
\ref{main_theorem} then for $M>1$ and $\beta\in[1,M]$
\begin{equation} \label{nbnd}
\log\left(\frac{1-a}a\right)\le
\log(\lambda_n^{2(\beta-1)}2^{\beta}-1)\le V(n)(\beta-1)
\end{equation}
with $V(n)=(2\log(2\lambda_n^2))(2\lambda_n^2)^{M-1}$ and for
$K\in[1,17]$,
\begin{equation}
\label{remark_inequality}
\log\left(\frac{1-a}a\right)\leqslant(K-1)(4+6\log 2) <9(K-1),\quad
\end{equation}
with equality only for $K=1$. For $n=2$ and $K>1$
\begin{equation} \label{ineq2}
\log\left(\frac{1-a}a\right)=
\log\left(\frac{\varphi_{K,2}(1/\sqrt{2})^2}{\varphi_{1/K,2}(1/\sqrt{2})^2}\right)
\leqslant b(K-1)
\end{equation}
where $b= (4/\pi){\K}(1/\sqrt{2})^2\le 4.38 \,.$
\end{lemma}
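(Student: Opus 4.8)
The plan is to treat the three displays separately, reducing each to the special-function estimates already in place. Throughout write $\beta=K^{1/(n-1)}$ and keep $a=\varphi_{1/K,n}(1/\sqrt2)^2$ as in Theorem~\ref{main_theorem}. For the first inequality in (\ref{nbnd}) I would apply the lower bound (\ref{7472}) of Lemma~\ref{cgqm747} at $r=1/\sqrt2$, giving $\varphi_{1/K,n}(1/\sqrt2)\ge\lambda_n^{1-\beta}2^{-\beta/2}$ and hence $a\ge\lambda_n^{2(1-\beta)}2^{-\beta}$. Passing to reciprocals yields $1/a\le\lambda_n^{2(\beta-1)}2^{\beta}$, so $(1-a)/a=1/a-1\le\lambda_n^{2(\beta-1)}2^{\beta}-1$, and taking logarithms gives the first inequality.

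For the second inequality in (\ref{nbnd}) the idea is to abbreviate $c=2\lambda_n^2$ and note the identity $\lambda_n^{2(\beta-1)}2^{\beta}=\lambda_n^{-2}c^{\beta}$, so the argument of the logarithm is $w-1$ with $w=\lambda_n^{-2}c^{\beta}$ and $w(1)=2$. Since $w-1=1+(w-2)$ with $w-2\ge0$ for $\beta\ge1$, the elementary bound $\log(1+s)\le s$ gives $\log(w-1)\le w-2=2\bigl(c^{\beta-1}-1\bigr)$. Writing $c^{\beta-1}-1=e^{(\beta-1)\log c}-1$ and using $e^{t}-1\le t\,e^{t}$ for $t\ge0$ with $t=(\beta-1)\log c$ produces $c^{\beta-1}-1\le(\beta-1)(\log c)\,c^{\beta-1}\le(\beta-1)(\log c)\,c^{M-1}$ on $[1,M]$. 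Combining the two steps gives exactly $\log(w-1)\le 2(\log c)c^{M-1}(\beta-1)=V(n)(\beta-1)$.

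To obtain (\ref{remark_inequality}) I would feed the second bound of Lemma~\ref{cgqm747}, namely (\ref{7502}) squared and reciprocated in the form $\lambda_n^{2(\beta-1)}\le 2^{2(\beta-1)}K^{2\beta}$, into the estimate $1/a\le\lambda_n^{2(\beta-1)}2^{\beta}$, yielding $1/a\le 2^{3\beta-2}K^{2\beta}$. The key reduction is that $\beta=K^{1/(n-1)}\le K$ for all $n\ge2$, $K\ge1$; since $2^{3x}$ and $K^{2x}$ are increasing this gives the dimension-free bound $1/a\le 2^{3K-2}K^{2K}$, hence $(1-a)/a\le 2^{3K-2}K^{2K}-1$. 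Now Lemma~\ref{mn_lemma}(2) with $m=3$, $n=2$ and variable $x=K$ applies on $[1,17]$ (as $a>17$ there), giving $\log((1-a)/a)\le(6\log2+4)(K-1)=(4+6\log2)(K-1)$ with equality only at $K=1$, and $4+6\log2<9$ closes (\ref{remark_inequality}).

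Finally, for (\ref{ineq2}) with $n=2$ I would use the Pythagorean identity (\ref{phipyth}) at $r=r'=1/\sqrt2$: since $\varphi_{K,2}=\varphi_K$, it gives $1-a=1-\varphi_{1/K,2}(1/\sqrt2)^2=\varphi_{K,2}(1/\sqrt2)^2$, which is the claimed formula for $(1-a)/a$. By (\ref{aform})--(\ref{eta1}) this quantity equals $\eta_{K,2}(1)$, and the upper estimate in (\ref{eta1b}), $\eta_{K,2}(1)<e^{b(K-1)}$, yields $\log((1-a)/a)<b(K-1)\le4.38(K-1)$. I expect the main obstacle to be the second inequality of (\ref{nbnd}): the stated bound $V(n)$ is not the sharpest possible, since concavity of $\beta\mapsto\log(w-1)$ would give the smaller slope $2\log(2\lambda_n^2)$, so the delicate point is to route the estimate through $\log(1+s)\le s$ and $e^{t}-1\le te^{t}$ in exactly the way that produces the factor $(2\lambda_n^2)^{M-1}$. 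The remaining steps are bookkeeping built on Lemmas~\ref{cgqm747} and~\ref{mn_lemma}.
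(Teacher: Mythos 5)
Your proposal is correct and takes essentially the same route as the paper's proof: the first inequality of (\ref{nbnd}) from (\ref{7472}) at $r=1/\sqrt 2$, the second from $\log t\leqslant t-1$ together with a mean-value-type estimate (your $e^t-1\leqslant t\,e^t$ is an equivalent substitute for the paper's appeal to Lagrange's theorem), then (\ref{7502}) combined with $\beta=K^{1/(n-1)}\leqslant K$ feeding Lemma \ref{mn_lemma} with $m=3$, $n=2$ to get (\ref{remark_inequality}), and finally (\ref{phipyth}), (\ref{eta1}) and (\ref{eta1b}) for (\ref{ineq2}). Your closing observation is also sound: concavity of $\beta\mapsto\log(2(2\lambda_n^2)^{\beta-1}-1)$ would indeed yield the sharper slope $2\log(2\lambda_n^2)$, but the paper, like you, states only the weaker bound $V(n)(\beta-1)$.
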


\begin{proof}
For $\beta\in[1,M]$ we have by (\ref{7472})
$$
\log\left(\frac{1-a}a\right)\le
\log(\lambda_n^{2(\beta-1)}2^\beta-1) \,.
$$
Further, we have
$$
\frac{\log(\lambda_n^{2(\beta-1)}2^\beta-1)}{\beta-1}\leqslant
2\,\frac{(2\lambda_n^2)^{\beta-1}-1}{\beta-1}\leqslant
(2\log(2\lambda_n^2))(2\lambda_n^2)^{M-1}.
$$
The second inequality follows from the inequality $\log(t)\leqslant
t-1$ and the third one from Lagrange's theorem and the monotonicity of
the function $(2\log(2\lambda_n^2))(2\lambda_n^2)^{x-1}$. This
proves (\ref{nbnd}).

From (\ref{7502}) it follows that the constant $a$ satisfies the
inequality
$$
a\ge 2^{2(1-K)} K^{-2K} (1/\sqrt{2})^{2K} \,
$$
and also
$$
1/a \le 2^{3K-2} K^{2K}\,,\quad K>1.
$$
By Lemma \ref{mn_lemma} we have
$$
\log(2^{3K-2} K^{2K}-1)\leqslant(4+6\log 2)(K-1)
$$
for $K\in[1,17]$ with equality only for $K=1$. Now, from
$$
\frac{1-a}{a}<2^{3K-2} K^{2K}-1,\quad K>1\,,
$$
we conclude that
$$
\log\left( \frac{1-a}{a} \right) \leqslant(4+6\log 2)(K-1) <9(K-1)
\, .
$$

For the case $n=2$ we can apply the identity (\ref{eta1}) and 
the inequality in (\ref{eta1b}).

\end{proof}



\section{Proof of Theorem \ref{main_theorem}}


Lemma \ref{levu1} and Theorem \ref{thmvu1} deal with the first part
of Problem \ref{myprob}.

\begin{lemma}{\rm \cite{vu1}} \label{levu1}
Let $f\in Id_K(\partial G)$, $a,b\in G$, $f(a)=b$, and let the boundary $\partial G$ be connected.
If $x\in\partial G$ is such that $d(a)=d(a,\partial G)=|a-x|\leqslant|b-x|$,
then
$$
K(f)\geqslant\overline{d}_n\left(\log\frac{|b-x|}{|a-x|}\right)^n,
\quad
\overline d_n=\frac{c_n}{\omega_{n-1}}\,\frac{(n-1)^{n-1}}{n^n}.
$$
\end{lemma}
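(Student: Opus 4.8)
The plan is to run a modulus-of-curve-family argument, exploiting that $f$ fixes the connected boundary $\partial G$ pointwise. First I would normalize by a translation so that $x=0$; then $f(0)=0$, and writing $r=|a|=d(a,\partial G)$ and $s=|b|\ge r$, the hypothesis reduces to $s\ge r$ while $f$ restricts to the identity on the whole continuum $\partial G\ni 0$. The quasiconformality enters only through the modulus inequality $M(\Gamma)/K\le M(f\Gamma)\le K\,M(\Gamma)$ valid for every curve family $\Gamma$ (see \cite{v2}), so the core of the proof is the choice of a good $\Gamma$ together with two matching modulus estimates.

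Next I would form a ring (condenser) whose image under $f$ is forced to be ``thinner'' than the ring itself, so that the distortion of its capacity yields a lower bound for $K$. The natural source continua are $E_0=[0,a]\subset\overline G$, joining the fixed tip $x=0$ to $a$, and a second continuum $E_1$ carved out of the fixed set $\partial G$. Since $f(0)=0$ and $f(a)=b$, the image $f(E_0)$ is a continuum joining $0$ to $b$, hence reaching radius $s\ge r$, while $f(E_1)=E_1$ is unchanged. Thus the image ring $fR$ has one complementary component spread out to radius $s$ against a component pinned near $0$, whereas in the source the corresponding component reaches only radius $r$; the image ring is therefore ``more linked'' and carries the larger capacity. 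Writing $\mathrm{cap}$ for the modulus of the connecting family and applying $M(f\Gamma)\le K\,M(\Gamma)$ to $\Gamma=\Delta(E_0,E_1)$ gives $\mathrm{cap}(fR)\le K\,\mathrm{cap}(R)$, so that $K\ge \mathrm{cap}(fR)/\mathrm{cap}(R)$ is the useful direction.

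To turn this into the explicit bound I would estimate the two capacities by spherical symmetrization, comparing them with the Gr\"otzsch/Teichm\"uller rings $\gamma_n,\tau_n$ introduced above, and then invoke the known two-sided estimates for $\gamma_n$ together with the dimensional constant $c_n$. The exponent $n$ and the factor $(n-1)^{n-1}/n^n$ in $\overline d_n$ should emerge from an optimization: I would test the image family with a one-parameter family of radial metrics supported on the shell $r<|y|<s$ and minimize the resulting upper bound over the scale parameter $t>0$, using the elementary identity $\max_{t>0} t^{n-1}/(1+t)^n=(n-1)^{n-1}/n^n$, attained at $t=n-1$, which is exactly the factor appearing in $\overline d_n$.

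The main obstacle I anticipate is geometric rather than computational: arranging the comparison with the model rings so that it captures precisely the ratio $s/r=|b-x|/|a-x|$ and no extraneous scale. A far second component would wash the ratio out, since both capacities then behave like $(\log)^{1-n}$ and their quotient tends to $1$; hence both complementary continua must be kept local near $0$, where $\partial G$ is tangent to the sphere $\{\,|y-a|=r\,\}$. Controlling the capacity of this tangential configuration, and checking that symmetrization preserves the dependence on $s/r$, is where the real work lies — and where the connectedness of $\partial G$ and the identity $d(a,\partial G)=|a-x|$ are both essential.
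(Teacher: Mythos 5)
The paper gives no proof of this lemma at all: it is quoted from \cite{vu1}. Still, the very shape of the constant $\overline d_n=\frac{c_n}{\omega_{n-1}}\,\frac{(n-1)^{n-1}}{n^n}$ encodes the argument used there: a Loewner-type lower bound (V\"ais\"al\"a's constant $c_n$) for a curve family in the \emph{image}, a spherical-ring upper bound (the constant $\omega_{n-1}$) for its preimage, and an optimal splitting of $\log(s/r)$, $r=|a-x|$, $s=|b-x|$, between the two estimates, which is exactly where $(n-1)^{n-1}/n^n$ comes from. Your skeleton --- the family joining $[x,a]$ to a continuum in the fixed set, quasi-invariance of the modulus, and the identity $\max_{t>0}t^{n-1}/(1+t)^n=(n-1)^{n-1}/n^n$ (which is $\max\{\alpha\beta^{n-1}:\alpha+\beta=1\}$ in disguise) --- is the right one, but the geometric core of your plan is backwards, and that is a genuine gap. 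Since $\partial G$ is connected \emph{in the M\"obius space} $\overline{\mathbb{R}^n}$, the fixed set $F=\overline{\mathbb{R}^n}\setminus G$ is a continuum containing both $x$ and $\infty$, hence meeting every sphere $S(x,t)$, $t>0$. One picks $\rho$ with $\log(\rho/r)=\frac{n-1}{n}\log(s/r)$ and takes the curves joining $f([x,a])$ to $F$ inside the outer annulus $A(x;\rho,s)=\{z:\rho<|z-x|<s\}$: both continua cross this annulus (for $F$ this is precisely its reach to $\infty$), so the modulus is at least $c_n\log(s/\rho)=\frac{c_n}{n}\log(s/r)$; the preimages of these curves have one endpoint on $[x,a]\subset\overline{B}^n(x,r)$ and the other at a \emph{fixed} point of $F$ at distance at least $\rho$ from $x$, hence they cross the inner annulus and have modulus at most $\omega_{n-1}\bigl(\tfrac{n-1}{n}\log(s/r)\bigr)^{1-n}$; quasiconformality then gives the claim. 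Restricting to curves whose images lie in the outer annulus is also what disposes of a problem you never address: $[x,a]$ and $F$ meet at $x$, so the family $\Delta(E_0,E_1;\mathbb{R}^n)$ as you literally propose it has infinite (or uncontrolled) modulus on both sides, and $K\ge M(f\Gamma)/M(\Gamma)$ says nothing.

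Your concluding prescription --- keep both complementary continua ``local near $0$'' because a far component would ``wash the ratio out'' --- is precisely what cannot work. If both continua are confined near $x$, the source and image moduli are each dominated by the local tangential configuration, their ratio stays bounded, and no factor $(\log(s/r))^n$ can emerge; the growth in $s/r$ comes from the fixed continuum stretching across the outer annulus while $[x,a]$ does not. Worse, a purely local argument proves too much: for $G=\mathbb{R}^n\setminus\overline{B}^n$, whose boundary near $x=e_1$ is indistinguishable from that of the ball (but which the hypothesis excludes, since the boundary taken in $\overline{\mathbb{R}^n}$ is $S^{n-1}\cup\{\infty\}$, disconnected), the map equal to the identity on $\overline{B}^n$ and to $y\mapsto|y|^{\alpha-1}y$ outside lies in $Id_K(\partial G)$ with $K=\alpha^{n-1}$ and sends $2e_1$ to $2^{\alpha}e_1$; hence only a power $n-1$ bound can hold there, so no argument insensitive to the global reach of the fixed set can yield the exponent $n$. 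A smaller error of the same kind: testing the \emph{image} family with radial metrics gives an \emph{upper} bound for its modulus, which is the useless direction; you need the Loewner lower bound on the image side and the test-metric (ring) upper bound on the source side.
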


The following result was proved in \cite{vu1}, however, under the
condition that the points are far away from each other. The general
case follows from the original result by reducing the constant. In
\cite{vu1}, an example was given to the effect that Theorem
\ref{vu1thm} cannot be improved to the claim that $a,b \in G,
k_G(a,b)>0$ implies $K(f)>1.$
\begin{theorem} \label{thmvu1}
\label{vu1thm} {\rm\cite{vu1}} Let $f\in Id_K(\partial G)$, $a,b\in G$
with $f(a)=b$. If $G$ is a uniform domain with connected boundary $\partial G\,,$ then
$$
K(f)\geqslant d_n\,k_G(a,b)^n
$$
where $d_n$ depends only on $n$ and $G$.
 \end{theorem}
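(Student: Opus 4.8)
The plan is to split the argument according to the size of $k_G(a,b)$, treating the "far apart" range by the estimate already available in \cite{vu1} and the complementary "nearby" range by the trivial bound $K(f)\ge 1$, gluing the two together by shrinking the constant $d_n$. First I would record two normalizations: since $f^{-1}\in Id_K(\partial G)$ with $K(f^{-1})=K(f)$ and $f^{-1}(b)=a$, and since $k_G$ is symmetric, I may assume $d(a,\partial G)\le d(b,\partial G)$ throughout; and I will keep in mind that $K(f)\ge 1$ always.

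For the far apart range I would invoke the version proved in \cite{vu1}: there exist $R_0=R_0(n,G)>0$ and $d_n'=d_n'(n,G)>0$ with $K(f)\ge d_n'\,k_G(a,b)^n$ whenever $k_G(a,b)\ge R_0$. For completeness this can be recovered from Lemma \ref{levu1}: choosing $x\in\partial G$ with $|a-x|=d(a,\partial G)$, the normalization $d(a,\partial G)\le d(b,\partial G)\le|b-x|$ guarantees the hypothesis $|a-x|\le|b-x|$, so that $K(f)\ge\overline d_n\bigl(\log(|b-x|/|a-x|)\bigr)^n$. Writing $s=|a-b|/d(a,\partial G)$ and using $|b-x|\ge|a-b|-d(a,\partial G)$ gives $\log(|b-x|/|a-x|)\ge\log(s-1)$, while the uniformity of $G$ supplies a distance--ratio bound $k_G(a,b)\le A\log(1+s)$ for some $A=A(G)\ge 1$. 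Since $k_G(a,b)\ge R_0$ forces $s\ge e^{R_0/A}-1$, the ratio $\log(s-1)/\log(1+s)$ is bounded below by some $\theta=\theta(R_0)\in(0,1)$, and combining the three inequalities yields $K(f)\ge\overline d_n\,\theta^n A^{-n}\,k_G(a,b)^n$ in this range.

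For the nearby range $k_G(a,b)<R_0$ I would use only $K(f)\ge 1$. Here $d_n\,k_G(a,b)^n<d_n R_0^n$, so the desired inequality $K(f)\ge d_n\,k_G(a,b)^n$ holds automatically as soon as $d_n\le R_0^{-n}$. Setting $d_n=\min\{d_n',R_0^{-n}\}$ then makes $K(f)\ge d_n\,k_G(a,b)^n$ valid simultaneously in both ranges, since reducing the constant can only weaken the far apart estimate while it makes the nearby estimate trivially true.

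The hard part is conceptual rather than computational. As the example in \cite{vu1} shows, $k_G(a,b)>0$ need not force $K(f)>1$, so in the nearby range no nontrivial lower bound for $K(f)$ can be hoped for and one is compelled to fall back on $K(f)\ge 1$; the real content of the extension is to verify that this trivial estimate is already consistent with the claimed power law once $d_n$ is chosen small enough, and that the threshold $R_0$ coming from the far apart result can be matched to the constant reduction by a single value of $d_n$. The uniformity of $G$ enters precisely to ensure that "$k_G(a,b)$ large" and "$s$ large" describe the same range, so that the two cases between them exhaust all of $G\times G$.
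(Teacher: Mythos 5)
Your proposal is correct and follows essentially the same route as the paper: the paper gives no detailed argument, stating only that the result was proved in \cite{vu1} for points far apart and that ``the general case follows from the original result by reducing the constant,'' which is precisely your case split with $d_n=\min\{d_n',R_0^{-n}\}$ and the trivial bound $K(f)\geqslant 1$ in the range $k_G(a,b)<R_0$. Your additional reconstruction of the far-apart estimate from Lemma \ref{levu1} together with the uniformity bound $k_G(a,b)\leqslant A\log(1+s)$ is sound (provided $R_0$ is taken large enough that $s>2$) and goes beyond what the paper records.
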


\begin{subsec}{\bf
Proof of Theorem \ref{main_theorem}.} Fix $x\in B^n$ and let $T_x$
denote a M\" obius transformation of $\overline{\mathbb R^n}$ with
$T_x(B^n)=B^n$ and $T_x(x)=0$. Define $g:\mathbb
R^n\longrightarrow\mathbb R^n$ by setting $g(z)=T_x\circ f\circ
T^{-1}_x(z)$ for $z\in B^n$ and $g(z)=z$ for $z\in\mathbb
R^n\setminus B^n$. Then $g\in Id_K(\partial B^n)$with
$g(0)=T_x(f(x))$. By the invariance of $\rho_{B^n}$ under the group
${\cal{GM}}(B^n)$ of M\"obius selfautomorphisms of $B^n$ we see that
for $x\in B^n$
\begin{equation}
\label{rho_Bn}
\rho_{B^n}(f(x),x)=\rho_{B^n}(T_x(f(x)),T_x(x))=\rho_{B^n}(g(0),0).
\end{equation}
Choose $z\in\partial B^n$ such that $g(0)\in
[0,z]=\{tz\,:\,0\leqslant t\leqslant 1\}$. Let
$E'=\{-sz\,:\,s\geqslant 1\}$, $\Gamma'=\Delta([g(0),z],E';\mathbb
R^n)$ and $\Gamma=\Delta(g^{-1}[g(0),z],g^{-1}E';\mathbb R^n)$. Observe
that $E'= g^{-1}E'$ because $g\in Id_K(\partial B^n)\,.$

The spherical symmetrization with center at $0$ yields by \cite[Thm 8.44]{avv}
$$
M(\Gamma)\geqslant\tau_n(1)\quad(=2^{1-n}\gamma_n(\sqrt 2))
$$
because $g(x)=x$ for $x\in\mathbb R^n\setminus B^n$.
Next, we see by the choice of $\Gamma'$ that
$$
M(\Gamma')=\tau_n\left(\frac{1+|g(0)|}{1-|g(0)|}\right).
$$
By $K$-quasiconformality we have $M(\Gamma)\leqslant K\,M(\Gamma')$ implying
\begin{equation}
\label{main_inequality}
\exp(\rho_{B^n}(0,g(0)))=\frac{1+|g(0)|}{1-|g(0)|}\leqslant\tau_n^{-1}(\tau_n(1)/K)=\frac{1-a}a.
\end{equation}
The last equality follows from (\ref{aform}). 
Finally, (\ref{rho_Bn}) and (\ref{main_inequality}) complete the proof. $ \hfill \square$
\end{subsec}

\begin{subsec}{\bf
Proof of Theorem \ref{mycor}.} We have
$$
\begin{array}{rcl}
|f(x)-x| & \leqslant & \displaystyle
2\tanh\left(\frac{\rho_{B^n}(f(x),x)}{4}\right)
\leqslant
2\tanh\left(\frac{\log\left(\frac{1-a}{a}\right)}{4}\right)
\vspace{1em}\\
& \leqslant & \displaystyle
2\tanh\left(\frac{(K-1)(4+6\log 2)}{4}\right)
\vspace{1em}\\
& \leqslant & \displaystyle
(K-1)(2+3\log 2)\leqslant\frac 92(K-1).
\end{array}
$$
The first inequality follows from (\ref{tanh}), the second one from Theorem \ref{main_theorem},
the third one from Lemma \ref{stabrmk} and the fourth one from the inequality
$\tanh(t)\leqslant t$ for $t\geqslant 0$.


For $n=2$ we use the same first two steps and the planar case of 
Lemma \ref{stabrmk} to derive the inequality
$$
|f(x)-x|\leqslant\frac{b}{2}(K-1). \quad \quad \hfill \square
$$
\end{subsec}

A lower bound corresponding to the upper bound in
(\ref{luckynumber}) is given in the next lemma.

\begin{lemma} For $f \in Id(\partial G)$ let
$$
\delta(f)  \equiv \sup \{|f(z)-z| : z \in G\}\,.
$$
Then for $f \in Id_K(\partial B^n), K>1,  \alpha=K^{1/(1-n)}$
\begin{equation} \label{**}
\delta(f) \ge (1-\alpha) \alpha^{\alpha/(1-\alpha)}
>\frac{1}{e}(1-\alpha).
\end{equation}
\end{lemma}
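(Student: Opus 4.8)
The plan is to exhibit a single explicit map in $Id_K(\partial B^n)$ realizing the stated displacement: since $\delta(f)$ is a supremum, a lower bound for the extremal problem amounts to constructing one good candidate. The natural choice is the radial power stretching $f(x)=|x|^{\alpha-1}x$ for $x\in\overline{B}^n$, extended by $f(x)=x$ for $x\in\overline{\mathbb R^n}\setminus B^n$, where $\alpha=K^{1/(1-n)}\in(0,1)$. On $\partial B^n$ we have $|x|=1$, so $f(x)=x$; hence $f$ is continuous across the sphere and belongs to $Id(\partial B^n)$, and it maps $B^n$ onto itself since $|f(x)|=|x|^\alpha$. As $f$ is $K$-quasiconformal in $B^n$ (verified below), conformal outside $\overline{B}^n$, and the sphere $\partial B^n$ is removable, $f$ is a $K$-quasiconformal homeomorphism of $\overline{\mathbb R^n}$.

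First I would verify that $K(f)=K$. For a radial map $x\mapsto\rho(|x|)\,x/|x|$ the principal stretchings are $\rho'(r)$ in the radial direction and $\rho(r)/r$ in the $(n-1)$ tangential directions. With $\rho(r)=r^\alpha$ these equal $\alpha r^{\alpha-1}$ and $r^{\alpha-1}$; since $\alpha<1$, the tangential stretching dominates. Computing the Jacobian $J=\rho'(r)\,(\rho(r)/r)^{n-1}=\alpha r^{(\alpha-1)n}$ gives outer dilatation $K_O=\Lambda^n/J=1/\alpha$ and inner dilatation $K_I=J/\lambda^n=\alpha^{1-n}$. Because $1/\alpha>1$ and $n-1\ge 1$, we have $\alpha^{1-n}=(1/\alpha)^{n-1}\ge 1/\alpha$, so $K(f)=\max\{1/\alpha,\alpha^{1-n}\}=\alpha^{1-n}=K$. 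Thus $f\in Id_K(\partial B^n)$.

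Next I would compute the displacement. For $r=|x|\in(0,1)$ one has $|f(x)-x|=r^\alpha-r$, which is positive since $\alpha<1$. Maximizing $g(r)=r^\alpha-r$ via $g'(r)=\alpha r^{\alpha-1}-1=0$ yields the critical radius $r^*=\alpha^{1/(1-\alpha)}$, and using the factorization $\alpha^{1/(1-\alpha)}=\alpha\cdot\alpha^{\alpha/(1-\alpha)}$ gives $g(r^*)=(1-\alpha)\alpha^{\alpha/(1-\alpha)}$. Choosing any $x$ with $|x|=r^*$ then shows $\delta(f)\ge(1-\alpha)\alpha^{\alpha/(1-\alpha)}$, which is the first inequality in (\ref{**}).

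Finally, the second inequality $(1-\alpha)\alpha^{\alpha/(1-\alpha)}>\frac{1}{e}(1-\alpha)$ reduces to $\alpha^{\alpha/(1-\alpha)}>1/e$, i.e. to $\frac{\alpha\log\alpha}{1-\alpha}>-1$, i.e. to $h(\alpha):=\alpha\log\alpha-\alpha+1>0$ on $(0,1)$. Since $h(1)=0$ and $h'(\alpha)=\log\alpha<0$ for $\alpha\in(0,1)$, the function $h$ is strictly decreasing and hence positive on $(0,1)$, which completes the proof. The only step requiring genuine care is the dilatation computation in the second paragraph, in particular correctly identifying that the inner dilatation $\alpha^{1-n}$, rather than the outer dilatation $1/\alpha$, governs $K(f)$ when $n\ge 2$; the remaining steps are a routine one-variable calculus exercise.
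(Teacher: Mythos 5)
Your proof is correct and follows essentially the same route as the paper: both construct the radial stretching $f(z)=|z|^{\alpha-1}z$ with $\alpha=K^{1/(1-n)}$, compute $\delta(f)=\sup_{0<r<1}(r^\alpha-r)=(1-\alpha)\alpha^{\alpha/(1-\alpha)}$ at the critical radius, and then deduce the crude bound $\frac{1}{e}(1-\alpha)$. The only (immaterial) differences are that you verify the dilatation $K(f)=\alpha^{1-n}$ by hand where the paper cites V\"ais\"al\"a, and you prove the final inequality by analyzing $\alpha\log\alpha-\alpha+1$ on $(0,1)$ whereas the paper simply evaluates $r^\alpha-r$ at $r=1/e$ and uses $e^{1-\alpha}-1\geq 1-\alpha$.
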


\begin{proof}
 The radial stretching $f: B^n \to B^n, n \ge 2, $ defined by
$f(z)=|z|^{\alpha-1}\,z, z \in B^n,$ ($0<\alpha<1$) is $K$-qc with
$\alpha=K^{1/(1-n)}$ \cite[p. 49]{v2} and $f \in Id_K(\partial
B^n)\,.$ Now we have
$$
|f(z)-z|=||z|^{\alpha-1}z-z|=|r^\alpha-r|,\quad |z|=r.
$$
Further, we see that
$$
\delta(f)=\sup_{0<r<1}(r^{\alpha}-r),
$$
where the supremum is attained for
$r=r_\alpha=\left(\frac{1}{\alpha}\right)^{\frac{1}{\alpha-1}}$, so
$$
\delta(f) =({1}-{\alpha}) \alpha^{\alpha/(1- \alpha)}\,.
$$
A crude, but simple, estimate is
$$
\delta(f)\ge (1/e)^{\alpha} - (1/e)
=\frac{1}{e}\left(\frac{1}{e^{\alpha
-1}}-1\right)=\frac{1}{e}\left(e^{1-\alpha}-1\right)
\geqslant\frac{1}{e}(1-\alpha)\, .
$$ 
\end{proof}

\begin{theorem} \label{etathm}
\label{bounds} Let $f:\overline{\mathbb
R^n}\longrightarrow\overline{\mathbb R^n}$ be a $K$-qc homeomorphism
with $f(\infty)=\infty$ and $B^n(m)\subset f(B^n) \subset B^n(M) $ where $0< m \le 1 \le M$. Then
$$
\eta_{1/K,n}\left(\frac{1+|x|}{1-|x|}\right)\leqslant\frac{M+|f(x)|}{m-|f(x)|}
$$
and
$$
\frac{m+|f(x)|}{M-|f(x)|}
\leqslant
\eta_{K,n}\left(\frac{1+|x|}{1-|x|}\right)
$$
for all $x\in B^n$ where $\eta_{K,n}(t)=\tau_n^{-1}(\tau_n(t)/K)$.

In particular, if $m=1=M$, then we have
$$
 \eta_{1/K,n}\left(\frac{1+|x|}{1-|x|}\right)\leqslant\frac{1+|f(x)|}{1-|f(x)|}
\leqslant
\eta_{K,n}\left(\frac{1+|x|}{1-|x|}\right)\, .$$
\end{theorem}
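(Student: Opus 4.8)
=== PROOF PROPOSAL ===

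\medskip
\noindent
\textbf{Proof proposal for Theorem \ref{etathm}.}
The plan is to reduce everything to a single modulus estimate for a Teichm\"uller-type ring and then read off the two inequalities by symmetry. Fix $x \in B^n$ and let $y=f(x)$. The key observation is that the hypothesis $f(\infty)=\infty$ together with $B^n(m) \subset f(B^n) \subset B^n(M)$ pins down where $f$ can send the unit sphere and the exterior: the image $f(B^n)$ is trapped between the two concentric spheres of radii $m$ and $M$. I would build a Teichm\"uller ring in the domain $B^n$ whose complementary continua are a segment running from $x$ out to $\partial B^n$ along a ray, and the opposite ray from $x$ toward the antipodal boundary point. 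The modulus of the connecting curve family is exactly $\tau_n$ evaluated at the ratio $(1+|x|)/(1-|x|)$, by the standard identification of $\tau_n$ with the modulus of the symmetrized Teichm\"uller configuration.

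\medskip
\noindent
Next I would push this ring forward by $f$. Under a $K$-qc map the modulus changes by at most a factor $K$, so $M(f\Gamma) \ge M(\Gamma)/K$ (and $\le K\,M(\Gamma)$ for the other direction). The image continua are constrained: one lands at $y=f(x)$ and reaches out to $f(\partial B^n)$, which lives between radii $m$ and $M$. To turn the modulus bound into a geometric one I would invoke spherical symmetrization (as in \cite[Thm 8.44]{avv}, exactly as in the proof of Theorem \ref{main_theorem}) to compare $M(f\Gamma)$ with the modulus of an extremal Teichm\"uller ring whose defining ratio is $(M+|y|)/(m-|y|)$ for one inequality and $(m+|y|)/(M-|y|)$ for the other. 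Applying $\tau_n^{-1}$, which is decreasing, converts
$$
\tau_n\!\left(\frac{1+|x|}{1-|x|}\right) \le K\,\tau_n\!\left(\frac{M+|y|}{m-|y|}\right)
$$
into
$$
\eta_{1/K,n}\!\left(\frac{1+|x|}{1-|x|}\right)=\tau_n^{-1}\!\big(K\,\tau_n(\tfrac{1+|x|}{1-|x|})\big) \le \frac{M+|y|}{m-|y|},
$$
using the definition $\eta_{K,n}(t)=\tau_n^{-1}(\tau_n(t)/K)$ from (\ref{aform}). The second inequality is obtained by running the same argument with the roles of domain and image continua interchanged, giving the factor $K$ on the other side and hence $\eta_{K,n}$ rather than $\eta_{1/K,n}$. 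Setting $m=M=1$ collapses both estimates to the stated two-sided bound.

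\medskip
\noindent
The main obstacle I anticipate is not the modulus inequality itself but the bookkeeping in the symmetrization step: one must choose the complementary continua of the image ring so that, after spherical symmetrization centered appropriately, the symmetrized ring is genuinely a Teichm\"uller ring with the claimed ratio. The subtlety is that $f(\partial B^n)$ is only known to lie in the spherical shell between radii $m$ and $M$, so the ``outer'' continuum must be taken conservatively (reaching in to radius $m$ or out to radius $M$ depending on which side of the inequality one is proving) to guarantee the symmetrized configuration dominates or is dominated by the extremal one. Getting the direction of each monotonicity correct—remembering that $\tau_n$ and $\tau_n^{-1}$ are both decreasing while $\eta_{K,n}$ is increasing—is where sign errors would creep in, so I would verify each step against the special case $m=M=1$, where the symmetric argument of Theorem \ref{main_theorem} already gives the right answer.
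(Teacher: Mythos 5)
Your overall strategy is the paper's own, up to the order in which the two inequalities are treated: the paper first builds the colinear segment-plus-ray ring inside $f(B^n)$, symmetrizes on the preimage side, and obtains the $\eta_{K,n}$ bound, then gets the $\eta_{1/K,n}$ bound by "applying a similar argument to $f^{-1}$" --- which is exactly your domain-side ring pushed forward. However, your write-up has a genuine direction error at its center. The displayed inequality
$$
\tau_n\left(\frac{1+|x|}{1-|x|}\right) \le K\,\tau_n\left(\frac{M+|y|}{m-|y|}\right)
$$
is false as stated: already for $f=\mathrm{id}$, $K=1$, $y=x$ and $m<1<M$ one has $\frac{M+|x|}{m-|x|}>\frac{1+|x|}{1-|x|}$, hence, $\tau_n$ being decreasing, the opposite strict inequality holds. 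The correct chain puts $K$ on the other side: symmetrization applied to the image continua (which contain $y$ and $f(z)\in\partial f(B^n)$, resp.\ $f(-z)\in\partial f(B^n)$ and $\infty$, so that $|y-f(-z)|\le M+|y|$ and $|y-f(z)|\ge m-|y|$) gives $M(f\Gamma)\ge\tau_n\left(\frac{M+|y|}{m-|y|}\right)$, while quasiconformality gives $M(f\Gamma)\le K\,M(\Gamma)=K\,\tau_n\left(\frac{1+|x|}{1-|x|}\right)$; combining these and applying the decreasing map $\tau_n^{-1}$ yields $\eta_{1/K,n}\left(\frac{1+|x|}{1-|x|}\right)\le\frac{M+|y|}{m-|y|}$. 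Note also that your conclusion does not follow from your displayed inequality even formally: from $A\le KB$ one gets $\tau_n^{-1}(A/K)\ge\tau_n^{-1}(B)$, i.e.\ $\frac{M+|y|}{m-|y|}\le\eta_{K,n}\left(\frac{1+|x|}{1-|x|}\right)$, which is a (false) strengthening of the second claim, not the first claim you assert.

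There is also a conceptual slip in how you propose to reach the $\eta_{K,n}$ inequality: spherical symmetrization only ever gives \emph{lower} bounds for the modulus, and applied to the image of the domain ring it can only produce the worst-case ratio $(M+|y|)/(m-|y|)$, never $(m+|y|)/(M-|y|)$. The small ratio enters the opposite way, and this is precisely why the paper constructs the ring inside $f(B^n)$: there the two continua are an honest colinear segment $[f(x),z']$ with $|z'|\le M$ and a ray to $\infty$ starting at a boundary point $w$ with $|w|\ge m$, so the modulus is an exact Teichm\"uller modulus and monotonicity of $\tau_n$ gives the upper bound $M(\Gamma')\le\tau_n\left(\frac{m+|y|}{M-|y|}\right)$ with no symmetrization at all; symmetrization is then used only on the preimage side, where $f^{-1}(z'),f^{-1}(w)\in\partial B^n$ give $M(\Gamma)\ge\tau_n\left(\frac{1+|x|}{1-|x|}\right)$, and $M(\Gamma)\le K\,M(\Gamma')$ closes the argument. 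This is not mere bookkeeping: for the pushed-forward domain ring there is no geometric upper bound of Teichm\"uller type (its continua may come arbitrarily close together near $\partial f(B^n)$), so the second inequality cannot be extracted from your single configuration --- you must build the dual configuration in the image, exactly as the paper does.
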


\begin{proof}
The proof is similar to the proof of Theorem \ref{main_theorem}. Fix
$x\in B^n$ and choose $z'\in\partial f(B^n)$ such that $f(x)\in[0,z']$ and $[f(x),z') \subset f(B^n)\,$
and fix $z" \in \partial f(B^n)$ such that $z',0,z"$ are on the same line, $0 \in[z', z"],$ and
$ \{-sz"\,:\,s\geqslant 1\} \subset {\mathbb R}^n \setminus f(B^n)\,.$
Let $\Gamma'=\Delta([f(x),z'],E';\mathbb{R}^n)$,
$E'=\{-sz"\,:\,s\geqslant 1\}$ and
$\Gamma=\Delta(f^{-1}[f(x),z'],f^{-1}E';\mathbb R^n)$. Then
$$
M(\Gamma')\le\tau_n\left(\frac{m+|f(x)|}{M-|f(x)|}\right)
$$
while applying a spherical symmetrization with center at the origin
gives
$$
M(\Gamma)\geqslant\tau_n\left(\frac{1+|x|}{1-|x|}\right)
$$
because $f^{-1}E'$ connects $\partial B^n$ and $\infty$.
Then the inequality $M(\Gamma)\leqslant K\,M(\Gamma')$
yields
$$
 \tau_n\left(\frac{1+|x|}{1-|x|}\right) \le K
 \tau_n\left(\frac{m+|f(x)|}{M-|f(x)|}\right),
$$
$$
\tau_n^{-1}( \frac{1}{K}\tau_n\left(\frac{1+|x|}{1-|x|}\right))
\ge\frac{m+|f(x)|}{M-|f(x)|}
$$

\begin{equation}
\label{eta}
\frac{m+|f(x)|}{M-|f(x)|}\leqslant\eta_{K,n}\left(\frac{1+|x|}{1-|x|}\right).
\end{equation}
The lower bound follows if we apply a similar argument to $f^{-1}$ and the lower bound
$$
M(\Gamma')\ge\tau_n\left(\frac{M+|f(x)|}{m-|f(x)|}\right) \,.
$$
\end{proof}

\begin{subsec}{\bf Remark.} \label{mynewobs}
Putting $x=0, m=1=M$ in (\ref{eta}) we obtain by (\ref{aform})
for  a $K$-qc homeomorphism $f:\overline{\mathbb
R^n}\longrightarrow\overline{\mathbb R^n}$
with $f(\infty)=\infty$ and $ f(B^n) = B^n $  that
$$   |f(0)| \le 1- 2a\,, a= \varphi_{1/K,n}(1/\sqrt{2})^2\,.$$
Further, if we use the lower bound (\ref{7502}) from Lemma
\ref{cgqm747} we obtain
$$   |f(0)| \le 1- 2^{1- \beta} 4^{1-K} K^{-2K}\,. $$
In the special case when $n=2$ we have
$$|f(0)| \le 1- 2^{3(1- K)}  K^{-2K}\le (2+ 3 \log 2)(K-1)\,.  $$
Note that this last inequality does not suppose that $f \in Id_K(\partial B^n)\,,$
only the hypotheses of Theorem \ref{etathm} are needed.
\end{subsec}

\begin{corollary} \label{corollary_eta} Let $n=2$ in addition to
the hypotheses of Theorem \ref{bounds}. Then
\begin{equation}
\label{corollary_equation}
\eta_{K,2}(t)=\frac{u^2}{1-u^2}=\frac{u^2}{v^2},
\end{equation}
where $u=\varphi_{K,2}\left(\sqrt{\frac{t}{1+t}}\right)$, $v=\varphi_{1/K,2}\left(\frac{1}{\sqrt{1+t}}\right)$
and
\begin{equation}
\label{corollary_inequality} |f(x)|\leqslant
2\,\varphi_{K,2}\left(\sqrt{\frac{1+|x|}{2}}\right)^2-1
\end{equation}
for all $x\in B^2$.
\end{corollary}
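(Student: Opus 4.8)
The plan is to treat the identity and the inequality separately, since the chain $\eta_{K,2}(t)=u^2/(1-u^2)=u^2/v^2$ is a purely function-theoretic statement about $\eta_{K,2}$, while the bound on $|f(x)|$ is an application of Theorem~\ref{bounds}. For the identity, the first equality $\eta_{K,2}(t)=u^2/(1-u^2)$ is nothing but (\ref{eta1}), since $u=\varphi_{K,2}(\sqrt{t/(1+t)})$ is precisely the quantity called $s$ there. For the second equality it suffices to show $1-u^2=v^2$. First I would set $r=\sqrt{t/(1+t)}$, so that $r'=\sqrt{1-r^2}=1/\sqrt{1+t}$, and then invoke the Pythagorean-type relation (\ref{phipyth}) (recalling that $\varphi_{K,2}=\varphi_K$): this gives $\varphi_{K,2}(r)^2+\varphi_{1/K,2}(r')^2=1$, i.e. $u^2+v^2=1$, whence $1-u^2=v^2$ and $u^2/(1-u^2)=u^2/v^2$.

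For the inequality I would use the special case $m=1=M$ of Theorem~\ref{bounds}, which yields $(1+|f(x)|)/(1-|f(x)|)\le\eta_{K,2}((1+|x|)/(1-|x|))$. The key reduction is to set $t=(1+|x|)/(1-|x|)$ and observe that $t/(1+t)=(1+|x|)/2$, so that the identity just proved gives $\eta_{K,2}(t)=u^2/(1-u^2)$ with $u=\varphi_{K,2}(\sqrt{(1+|x|)/2})$. It then remains to invert the increasing M\"obius-type function $s\mapsto(1+s)/(1-s)$ on $(-1,1)$: solving $(1+w)/(1-w)=u^2/(1-u^2)$ gives $w=2u^2-1$, and monotonicity turns the bound on $(1+|f(x)|)/(1-|f(x)|)$ into $|f(x)|\le 2u^2-1=2\,\varphi_{K,2}(\sqrt{(1+|x|)/2})^2-1$.

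I expect no serious obstacle here: the whole argument is bookkeeping built on (\ref{eta1}), (\ref{phipyth}), and Theorem~\ref{bounds}. The only points requiring a little care are the cancellation $t/(1+t)=(1+|x|)/2$ and the monotonicity of $s\mapsto(1+s)/(1-s)$, which legitimizes passing from the inequality for $(1+|f(x)|)/(1-|f(x)|)$ to the inequality for $|f(x)|$ itself; one should also check that $2u^2-1<1$, so that the resulting value stays admissible. If instead one wants the statement under the full hypotheses $B^n(m)\subset f(B^n)\subset B^n(M)$, the same inversion applied to (\ref{eta}) produces $|f(x)|\le (M+m)u^2-m$, which reduces to the stated bound exactly when $m=1=M$.
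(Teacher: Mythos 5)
Your proposal is correct and follows essentially the same route as the paper: the identity comes from (\ref{eta1}) (with the second equality $1-u^2=v^2$ supplied by (\ref{phipyth}), a detail the paper leaves implicit since it was already used to derive (\ref{eta1})), and the bound on $|f(x)|$ comes from the $m=1=M$ case of Theorem \ref{bounds} combined with the substitution $t=(1+|x|)/(1-|x|)$, $t/(1+t)=(1+|x|)/2$, and solving $(1+w)/(1-w)=u^2/(1-u^2)$ for $w=2u^2-1$. Your closing remark giving $|f(x)|\le (M+m)u^2-m$ for general $m,M$ is a small correct bonus beyond what the paper states.
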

\begin{proof}
The identity (\ref{corollary_equation}) holds by (\ref{eta1}). Next
Theorem \ref{bounds} together with (\ref{corollary_equation}) yields
$$
\frac{1+|f(x)|}{1-|f(x)|}\leqslant\frac{w^2}{1-w^2}
$$
where $w=\varphi_{K,2}\left(\sqrt{\frac{1+|x|}2}\right)$. Solving
this for $|f(x)|$ yields (\ref{corollary_inequality}).
\end{proof}

\begin{rmk} {\rm
By the $K$-quasiconformal Schwarz lemma if $f:B^2\longrightarrow
B^2$ is $K$-quasiconformal with $f(0)=0$ then
$|f(z)|\leqslant\varphi_{K,2}(|z|)$, for all $z\in B^2$, where the
sharp bound is attained for a map with $f(B^2)=B^2$ (\cite{lv}). Note that in
Corollary {\ref{corollary_eta}} the condition $f(0)=0$ is not
required. We conclude that
\begin{equation}
\label{phi_K2} \varphi_{K,2}(r)\leqslant
2\,\varphi_{K,2}(\sqrt{\frac{1+r}{2}})^2-1.
\end{equation}
Writing $A(r,s)=\sqrt{\frac{r+s}{2}}$ {(\ref{phi_K2})} says that if
$t=1, r\in (0,1)$ then
$$
A(\varphi_{K,2}(t),\varphi_{K,2}(r))\leqslant\varphi_{K,2}(A(t,r)).
$$
It seems natural to expect that this inequality holds for all $t,
r\in (0,1)\,.$ }
\end{rmk}

\bigskip

{\bf Acknowledgement.} The research of the second author was supported
by the project "Quasiconformal Maps'' nr 209539  of Matti Vuorinen
funded by the Academy of Finland.
\small

  \normalsize

\bigskip

V. Manojlovi\'c,
Department of Mathematics,
Jove Ilica 154, 11000 Beograd, SerbiaFON,
 email: {\tt  vesnak@fon.bg.ac.yu}

M. Vuorinen, Department of Mathematics,
FIN-20014 University of Turku, Finland.
email: {\tt vuorinen@utu.fi}

\end{document}